\newtheorem{theorem}{Theorem}[section]
\newtheorem{proposition}{Proposition}[section]
\newtheorem{lemma}{Lemma}[section]
\newtheorem{remark}{Remark}[section]
\newcommand{\tore}{\mathbb{T}}
\newcommand{\R}{\mathbb{R}}
\newcommand{\T}{\mathbb{T}}
\newcommand{\N}{\mathbb{N}}
\newcommand{\Z}{\mathbb{Z}}
\newcommand\D{\partial}
\newcommand{\hs}{H_{s}}
\newcommand{\hk}{H_{k}}
\newcommand{\hd}{H_{2}}
\newcommand{\htr}{H_{3}}
\newcommand{\hq}{H_{4}}
\newcommand{\hc}{H_{5}}
\newcommand{\hm}{H_{m}}
\newcommand{\hsp}{H_{s'}}
\newcommand{\hspru}{H_{s' + 1}}
\newcommand{\hsprd}{H_{s' +2}}
\newcommand{\lihd}{{L^{\infty}(0, T ;  H_2)}}
\newcommand{\lihtr}{{L^{\infty}(0, T;  H_3)}}
\newcommand{\lihsp}{{L^{\infty}(0, T;  H_{s'} )}}
\newcommand{\omo}{\omega_0}
\newcommand{\omd}{\omega_{\delta}}
\newcommand{\ombd}{\omega^{\beta}_{\delta}}
\newcommand{\omad}{\omega_{\delta}^{\alpha}}
\newcommand{\omod}{\omega_{0, \delta}}
\newcommand{\ua}{u^{\alpha}}
\newcommand{\ub}{u^{\beta}}
\newcommand{\ud}{u_{\delta}}
\newcommand{\uad}{u_{\delta}^{\alpha}}
\newcommand{\uab}{u^{\alpha, \beta}}
\newcommand{\uabn}{u^{\alpha, \beta, \nu}}
\newcommand{\uo}{u_{0}}
\newcommand{\uoa}{u_{0}^\alpha}
\newcommand{\uob}{u_{0}^\beta}
\newcommand{\uod}{u_{0,\delta}}
\newcommand{\uobd}{u_{0,\delta}^{\beta}}
\newcommand{\uok}{u_{0, k}}
\newcommand{\uobk}{u_{0, k}^\beta}
\begin{document}

\title[On Euler-Voigt and Navier-Stokes-Voigt models]{Some results on
  the well-posedness of Euler-Voigt and Navier-Stokes-Voigt models.}

\author[L. C. Berselli]{Luigi C. Berselli}

\address{(Luigi C. Berselli) Dipartimento di Matematica Applicata ``U.~Dini,''
  Universit\`a di Pisa, Via F.~Buonarroti 1/c, I-56127, Pisa, Italia}

\email{\href{mailto:berselli@dma.unipi.it}{berselli@dma.unipi.it}}

\urladdr{\url{http://users.dma.unipi.it/berselli}}

\author[L. Bisconti]{Luca Bisconti}

\address{(Luca Bisconti) Dipartimento di Matematica Applicata ``G.~Sansone,''
  Universit\`a di Firenze, Via S.~Marta~3, I-50139, Firenze, Italia}

\email{\href{mailto:luca.bisconti@unifi.it}{luca.bisconti@unifi.it}}

\date{\today}

\begin{abstract}
  We consider the Euler-Voigt equations and the Navier-Stokes-Voigt
  equations, which are obtained by an inviscid $\alpha$-regularization
  from the corresponding equations. The main result we show is the
  structural stability of the system in term of the variations of both
  viscosity of regularization parameters.
\end{abstract}
\maketitle
\section{Introduction}
One of the most challenging problems in scientific computing is that
of producing reliable simulations of turbulent flows. Since the work
of Kolmogorov in 1941, it is well assessed that there are prohibitive
limitations due to the smallest scales which are persistent in flows
at very high Reynolds number. The limitations (due to speed but also
to memory capacity  of the available most powerful computers) in
performing direct numerical simulations makes the field particularly
challenging. Incompressible fluid with constant density are described
by the Navier-Stokes equations
\begin{subequations}
  \begin{align}
    \D_t v + (v \cdot \nabla)\,v - \nu\Delta v+ \nabla p = f,
    \label{eq:NSa}
    \\
    \nabla \cdot v = 0, \label{eq:NSb}
  \end{align}
\end{subequations}
supplemented with initial and boundary conditions, where
$v(t,x)=(v_1,v_2,v_3)$ is the velocity field, $p(t,x)$ denotes the
pressure, $f(t,x)=(f_1,f_2,f_3)$ is the external force, and $\nu>0$ is
the kinematic viscosity. In the sequel we will consider mainly the
problem in the space periodic setting.\par
One main idea is that of studying averaged or filtered equations, see
the mathematical overview in~\cite{BIL2006}. Many models for the
numerical simulation of the Large Scales (the only ones which are
effectively computable and of relevance for the applications) has been
proposed. Among these models in the recent years there has been a lot
of activity, from both the pure and applied mathematicians, around the
so called ``$\alpha$-models.'' These models are based on a
filtering/smoothing obtained with the application of the inverse of
the Helmholtz operator
\begin{equation*}
  \mathrm{I}-\alpha^2\Delta,
\end{equation*}
and two main questions arise: a) How to describe the (nonlinear)
quadratic term $ (\mathrm{I}-\alpha^2\Delta)^{-1}(v\cdot\nabla)\,v$ in
terms of $(\mathrm{I}-\alpha^2\Delta)^{-1}v$ only (interior closure
problem in LES); b) The role of boundary conditions supplementing the
Helmholtz operator and the derived model. Roughly speaking, the effect
of applying the inverse of the Helmholtz operator is that of getting
two more derivatives of the solution under control.  We also recall
that Leray's~\cite{Ler1934} approach to construct weak solutions of
the Navier-Stokes equations by smoothing just the convective velocity
is based on a very similar idea, with regularization made by
convolution.  There is a big variety in the family of
$\alpha$-approximations to the Navier-Stokes equations and we recall
in alphabetic order, without the claim of being exhaustive, some of
the recent publications about the
subject~\cite{BL2009,CHT2005,CLT2006,CHOT2005,
  DE2006,FHT2002,GL2000,ILT2006,LMNR2010,LL2006a,LN2007b,LST2008}. Further
details can be found in the introduction
of~\cite{CHT2005,OT2007,Reb2008}.  We observe that essentially all the
above methods are based on a sort of regularization by a viscous
smoothing of the equations, which is reflected in better analytical
properties.

A very promising and new approach, recently introduced by Cao,
Lunasin, and Titi~\cite{CLT2006}, is that of the inviscid
regularization coming from the Layton-Lewandowski (or simplified
Bardina) model, which is a zeroth order deconvolution method, when the
viscosity vanishes. Authors in~\cite{CLT2006} observed that setting
the viscosity $\nu=0$ in that model gives the following (with
$u=(\mathrm{I}-\alpha^2\Delta)^{-1}v$) Euler-Voigt model:
\begin{subequations}
  \begin{align}
    \D_t u -\alpha^2 \D_t \Delta u + (u \cdot \nabla)\,u + \nabla p =
    f,
    \label{eq:EVa}
    \\
    \nabla \cdot u = 0.  \label{eq:EVb}
  \end{align}
\end{subequations}
Curios enough, when the viscosity is reintroduced, this turns out to
coincide with a model for visco-elastic fluids studied starting from
the seventies by Oskolkov~\cite{Osk1973,Osk1982} and known as the
Navier-Stokes-Voigt (sometimes written Voight) model:
\begin{subequations}
  \begin{align}
    \D_t u - \alpha^2 \D_t \Delta u + (u\cdot \nabla)\,u - \nu \Delta
    u+\nabla p = f, \label{NSV1a}
    \\
    \nabla \cdot u =0. \label{eq:NSV1b}
  \end{align}
\end{subequations}
The idea of the inviscid regularization is very interesting for two
main reasons: 1) this is not a viscous regularization, and the energy
behavior is respected in a more precise way; 2) This regularization
does not introduce new boundary conditions, and gives chances to the
study of the problem in bounded domain, without the aforementioned
usual difficulties coming in theory of Large Eddy Simulations, when
employed in presence of solid boundaries. Related ideas, based on time
relaxation, has been introduced in~\cite{LN2007b}, while the same
inviscid regularization has been also used in the study of water
waves~\cite{BBM1972}, while recent applications to the
quasi-geostrophic equations are given in~\cite{KT2008}.\par
A detailed account of many properties of Voigt equations has been
given in~\cite{KLT2009,KT2009,LT2010,LiT2010} and, beside giving a
motivation based on the role in scientific computing for the study of
the Navier-Stokes and Euler Voigt models, in this paper we treat
questions more linked with the general theory of partial differential
equations, as explained for instance in~\cite{Bei2009}. In particular,
the main results, which are obtained by using in a simplified setting
the techniques introduced by Beir\~ao da
Veiga~\cite{Bei1988,Bei1993a,Bei1994a} concern the well-posedness of
the equations, which is also relevant to understand the stability of
the equations with respect to small perturbation of the
parameters. The results we prove are not surprising and they are strictly
linked with the similar ones recently proved by Linshiz and
Titi~\cite{LiT2010}. Nevertheless our results show how the systems are
robust and this gives new support for their employment in numerical
computations.

\medskip

\noindent\textbf{Plan of the paper.} In Section~\ref{sec:preliminary}
we introduce the notation and give some remarks on the existence of
smooth solutions. In Section~\ref{sec:convergence-to-Euler} we prove
the sharp limits as the regularization parameter $\alpha$ vanishes,
while in Section~\ref{sec:hadamard} we study both limits as $\nu$ and
$\alpha$ vanish.
\section{Some preliminary results}
\label{sec:preliminary}
We introduce now the notation and give some remarks on the existence
of solutions to the Voigt models.

\subsection{Notation}
In the sequel we will use the customary Lebesgue $L^p$ and Sobolev
spaces $W^{k,p}$ and $H^s:=W^{s,2}$, and for simplicity we do not
distinguish between scalar and vector valued functions. Since we will
mainly work with periodic boundary conditions the spaces are made of
periodic functions. In the Hilbertian case $p=2$ we can easily
characterize the divergence-free spaces by using Fourier Series on the
3D torus: Define $\Omega := [0,2\pi]^3$. We denote by $({e}_1, { e}_2,
{ e}_3)$ the orthonormal basis of $\R^3$, and by $x:=(x_1, x_2, x_3)
\in \R^3$ the standard point in $\R^3$.  Let $\mathbb{T}$ be the torus
defined by $\mathbb{T} := \R^3 / \Z^3$.  We use $\|\cdot\|$ to
denote the $L^{2}(\tore)$ norm and we impose the zero mean condition
$\displaystyle \int_{\Omega}\phi \,dx=0$ on velocity, pressure and
external force.  We define, for an exponent $s\geq0$,
\begin{equation*}
  {H}_{s} := \left\{w : \tore \rightarrow \R^3, \, \,
    w \in H^{s}(\tore)^3,   \quad\nabla\cdot w
    = 0, \quad\int_{\tore}w\,dx = {0} \right\},
\end{equation*}
where $H^{s}(\tore)^3:=\big[H^{s}(\tore)\big]^3$ and if $0\leq s<1$
the condition $\nabla\cdot w=0$ must be understood in a weak
sense. For $w \in H_s$, we can expand the velocity field with Fourier
series
\begin{equation*}
  w (x)=\sum_{k \not=0}\widehat{w}_{k}
  e^{i{k \cdot x}},\quad\text{where }k \text{ is the wave-number,}
\end{equation*}
and the Fourier coefficients are given by
$ \widehat{w}_{k}=\frac{1}{|\tore | }\int_{\tore} w(x)e^{-i{k \cdot
    x}}dx$, %
where $|\tore |$ denotes the Lebesgue measure of $\tore$. If $
k:=|k|=\sqrt{|k_{1}|^{2}+|k_{2}|^{2}+|k_{3}|^2}$, then the ${H}_{s}$
norm is defined by
\begin{equation*}
  \| w \|^{2}_{H_s} = \sum_{k\not=0} |k|^{2s} |\widehat{w }_{k}|^{2},
\end{equation*}
where, as above, $\| w \|_{H_0} := \|w\|$. The inner products
associated to these norms are
\begin{equation*}
  (w, v )_{{H}_s} = \sum_{k\not=0} |k|^{2s}  \widehat{w
  }_{k}\cdot\overline{\widehat{v }_{k}}. 
\end{equation*}
We finally characterize $H_s\subset H^s(\tore)$ as follows:
\begin{equation*}
  {H}_s := \Big \{ w  = \sum_{{k}\not=0} \widehat w_{ k}e^{i{k \cdot x}}:\ \  
  \sum_{{k\not=0}} | {k} |^{2s} |\widehat{w}_{ k}|^{2} < \infty,\
  {k}\cdot \widehat{w}_{k}=0,
  \widehat{w}_{-k}= \overline{\widehat{w}_{k}}\  \Big\}.
\end{equation*}
Moreover, we will use the smoothing operator defined for all
$\delta>0$ and for all functions $f$ periodic and divergence free, as
follows: For any $m\geq0$ and given $f\in H_m$, then the function
$f_\delta$ defined as
\begin{equation}
  \label{eq:filter}
  f_\delta:=\sum_{|k|<1/\delta}  \widehat{f}_k e^{i k\cdot x},
\end{equation}
is such that $f_\delta\in H_s$ for all $s\in \R$ (hence it is
infinitely differentiable) and $f_\delta\to f$ in $H_m$ as
$\delta\to0$. Other properties of this (truncation in wave-number
space) smoothing operator can be easily obtained from the Fourier
series characterization and will be recalled in
Section~\ref{sec:convergence-to-Euler}.\par
As a final remark on function spaces, we will also use (but only in
Section~\ref{sec:time-periodic}) the characterization of divergence
free subspaces of $L^2$ and $H^1$, with vanishing Dirichlet boundary
conditions for which we refer for instance to Constantin and
Foias~\cite{CF1988}.\par
In the sequel (especially to obtain estimates for solutions in $H_s$,
with $s$ non-integer) we will also use some elementary commutator type
estimates as the following lemma concerning the operator $\Lambda^s$,
$s\in \R^+$ (see e.g.~\cite{KP1988,KPV1991,Tay1991}), where
$\Lambda := (-\Delta)^{1/2}$.
\begin{lemma}
  \label{lem:lemmaRe}
  For $s > 0$ and $1 < r \leq \infty$, and for smooth enough $u$ and
  $v$
  \begin{equation*}
    \|\Lambda^s(u v)\|_{L^r}\leq  
    C(\|u\|_{L^{p_1}}\|\Lambda^s v\|_{L^{q_1}}+ 
    \|v\|_{L^{p_2}} \|\Lambda^s u\|_{L^{q_2}} ),
  \end{equation*}
  where $1/r = 1/{p_1} + 1/{q_1}=1/{p_2} + 1/{q_2}$ and $C$ is a
  suitable positive constant.
\end{lemma}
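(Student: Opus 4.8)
The plan is to prove this as an instance of the Kato--Ponce fractional Leibniz rule, by a Littlewood--Paley paraproduct argument. Since the references~\cite{KP1988,KPV1991,Tay1991} work on $\R^n$ while here the domain is the torus $\T$, I would first observe that the dyadic Littlewood--Paley blocks $P_j$ (projection onto the Fourier modes $k$ with $|k|\sim 2^j$) and the partial sums $S_j=\sum_{l<j}P_l$ are built from the Fourier series exactly as on $\R^3$, and that for $1<r<\infty$ the three tools I will use carry over with the same proofs: Bernstein's inequalities for frequency-localised functions; the maximal bound $\sup_j|S_j f|\lesssim\mathcal M f$ with $\|\mathcal M f\|_{L^r}\lesssim\|f\|_{L^r}$; and the square-function characterisation $\|\Lambda^\sigma w\|_{L^r}\sim\big\|\big(\sum_j 2^{2j\sigma}|P_j w|^2\big)^{1/2}\big\|_{L^r}$ for every $\sigma\ge 0$. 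I would then apply Bony's decomposition,
\[
  uv=\sum_j S_{j-2}u\,P_j v+\sum_j S_{j-2}v\,P_j u+\sum_{|j-l|\le 1}P_j u\,P_l v=:\Pi_1+\Pi_2+\Pi_3,
\]
and bound $\|\Lambda^s\Pi_i\|_{L^r}$ termwise.

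For the low--high paraproduct $\Pi_1$, each summand $S_{j-2}u\,P_j v$ has Fourier support in an annulus $\{|k|\sim 2^j\}$, so $\Lambda^s$ acts on it as $2^{js}$ times a Fourier multiplier with a fixed rescaled (hence uniformly $L^r$-bounded) Mikhlin symbol. Because the summands are frequency-localised in essentially disjoint annuli, applying the square-function characterisation to $\Lambda^s\Pi_1$ together with the uniform multiplier bound gives $\|\Lambda^s\Pi_1\|_{L^r}\lesssim\big\|\big(\sum_j 2^{2js}|S_{j-2}u\,P_j v|^2\big)^{1/2}\big\|_{L^r}\le\big\|\big(\sup_j|S_{j-2}u|\big)\big(\sum_j 2^{2js}|P_j v|^2\big)^{1/2}\big\|_{L^r}$, and then Hölder with $1/r=1/p_1+1/q_1$, the maximal bound for the first factor in $L^{p_1}$, and the square-function characterisation of $\|\Lambda^s v\|_{L^{q_1}}$ for the second, yield $\|\Lambda^s\Pi_1\|_{L^r}\lesssim\|u\|_{L^{p_1}}\|\Lambda^s v\|_{L^{q_1}}$. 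The high--low piece $\Pi_2$ is symmetric and produces $\|v\|_{L^{p_2}}\|\Lambda^s u\|_{L^{q_2}}$.

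The resonant term $\Pi_3$ is the delicate one, and the only place where $s>0$ is needed. Here $P_j u\,\widetilde P_j v$ (with $\widetilde P_j=P_{j-1}+P_j+P_{j+1}$) has Fourier support only in a ball $\{|k|\lesssim 2^j\}$ rather than in an annulus, so I would reconstruct $\Lambda^s\Pi_3$ from its \emph{own} blocks $P_m(\Lambda^s\Pi_3)$, note that only indices $j\gtrsim m$ contribute and that $\Lambda^s P_m$ costs the factor $2^{ms}$, and use that $\sum_{m\lesssim j}2^{(m-j)s}\lesssim_s 1$, a geometric series that converges \emph{precisely because $s>0$}. After this bookkeeping one is reduced, exactly as for $\Pi_1$, to a Hölder estimate with $1/r=1/p_2+1/q_2$ pairing the square function of $\{2^{js}P_j u\}_j$, which is $\sim\|\Lambda^s u\|_{L^{q_2}}$, against $\sup_j|\widetilde P_j v|\lesssim\mathcal M v$, bounded by $\|v\|_{L^{p_2}}$ in $L^{p_2}$; this gives $\|v\|_{L^{p_2}}\|\Lambda^s u\|_{L^{q_2}}$, while putting the derivative on $v$ instead gives $\|u\|_{L^{p_2}}\|\Lambda^s v\|_{L^{q_2}}$. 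Summing the three contributions proves the lemma in the range $1<r<\infty$ (with all exponents finite).

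I expect the two real difficulties to be the frequency bookkeeping for $\Pi_3$ — one must check that the loss of output frequency localisation costs exactly a factor $2^{js}$ and no worse, which is exactly where the hypothesis $s>0$ enters — and the cases involving an $L^\infty$ Sobolev norm, in particular the endpoint $r=\infty$, which forces $p_1=q_1=p_2=q_2=\infty$: these lie outside plain $L^r$ Littlewood--Paley theory and must be obtained from the Besov/$\mathrm{BMO}$ refinements contained in~\cite{KP1988,KPV1991,Tay1991}. As an alternative to the paraproduct argument that delivers the generic range $1<r<\infty$ at once, one may instead smoothly partition the frequency plane into the regions $|\xi|\ll|\eta|$, $|\eta|\ll|\xi|$, and $|\xi|\sim|\eta|$, check that on each region $|\xi+\eta|^s$ divided by $|\xi|^s$ or by $|\eta|^s$ is a Coifman--Meyer bilinear symbol, and invoke the Coifman--Meyer multiplier theorem — the resonant region $|\xi|\sim|\eta|$ again requiring $s>0$.
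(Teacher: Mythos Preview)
Your paraproduct sketch is the standard route to the Kato--Ponce product estimate and is correct for $1<r<\infty$ with finite H\"older exponents. The paper, however, does not prove this lemma at all: it is stated as a known tool with the parenthetical reference ``(see e.g.~\cite{KP1988,KPV1991,Tay1991})'' and no argument is given. So you have supplied substantially more than the paper does --- the paper's ``proof'' is simply the citation, and what you wrote is precisely the argument one finds in those references.

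One remark worth keeping: you are right to isolate $r=\infty$ as the genuinely delicate case, since it forces all auxiliary exponents to $\infty$ and the square-function/maximal-function machinery is unavailable there. The cited works \cite{KP1988,KPV1991,Tay1991} in fact establish the estimate only in the open range $1<r<\infty$; the $L^\infty$ endpoint is a later and considerably harder story. Since the paper invokes the lemma exclusively at $r=2$ (in the $H_{s'}$ energy estimates of Sections~\ref{sec:convergence-to-Euler} and~\ref{sec:hadamard}), the inclusion of $r=\infty$ in the statement is a harmless imprecision that never affects the arguments, and your caveat is exactly the right one.
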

\subsection{Existence results}
Concerning the Euler-Voigt equations we have the following result of
global existence and uniqueness of solutions proved
in~\cite[Thm.~3.1]{LT2010}: Therein the result is proved in two
different ways, one with the contraction principle and the other one
by means of the Galerkin method. The results explain the hyperbolic
nature of the problem and are a starting point also to prove Gevrey
regularity results. Here we give some remarks on one main technical
point and also explain some (non strictly essentials) details on the
external force.
\begin{theorem} {\rm (Larios and Titi~\cite{LT2010})}
  Let $T > 0$ and let $\uoa \in H_m$ and $f \in C(-T,T;W^{m-1,6/5})$,
  with $\nabla\cdot f=0$, for $m \geq 1$. Then, there exists a unique
  solution $\ua$ of the Euler-Voigt
  equations~\eqref{eq:EVa}-\eqref{eq:EVb} which belongs to $C^1[-T, T
  ];H_m)$.  Moreover,
  \begin{equation*}
    \|\ua (t) \|_{H^m} < C(\alpha,
    \|\uo\|_{H^m},\sup_{-T<t<T}\|f(t)\|_{m-1,6/5},T),
  \end{equation*}
  for all $t \in [-T, T ]$.
\end{theorem}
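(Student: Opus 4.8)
The strategy is a standard energy/fixed-point argument exploiting the fact that the Voigt regularization $\mathrm{I}-\alpha^{2}\Delta$ turns the system into an ODE-like evolution in $H_{m}$ after applying $(\mathrm{I}-\alpha^{2}\Delta)^{-1}\mathbb{P}$, where $\mathbb{P}$ is the Leray projector; on the torus with zero mean these operators are Fourier multipliers that commute with everything in sight. I would first rewrite \eqref{eq:EVa}-\eqref{eq:EVb} in the functional form $\partial_{t}u=(\mathrm{I}-\alpha^{2}\Delta)^{-1}\mathbb{P}\big(f-(u\cdot\nabla)u\big)$, noting that for fixed $\alpha>0$ the operator $(\mathrm{I}-\alpha^{2}\Delta)^{-1}$ gains two derivatives, so the right-hand side is a locally Lipschitz map from $H_{m}$ to $H_{m}$ (here one uses the algebra-type bound of Lemma~\ref{lem:lemmaRe}, or simply that $H^{m}$ is a Banach algebra for $m\geq 2$ and a direct product estimate for $1\le m<2$, to control $(u\cdot\nabla)u$ in $H^{m-1}$ and then recover two derivatives). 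A Picard iteration on $C([-T_{0},T_{0}];H_{m})$ then yields a unique local-in-time solution; uniqueness globally follows by the usual continuation argument once an a priori bound is in hand.

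The core is the a priori estimate. I would test the equation with $u$ in $H_{m}$, i.e. apply $\Lambda^{m}$, pair with $\Lambda^{m}u$ in $L^{2}$, and use that the $\alpha^{2}\partial_{t}\Delta u$ term contributes $\tfrac{\alpha^{2}}{2}\tfrac{d}{dt}\|\nabla u\|_{H^{m}}^{2}$, so the left side becomes $\tfrac12\tfrac{d}{dt}\big(\|u\|_{H^{m}}^{2}+\alpha^{2}\|u\|_{H^{m+1}}^{2}\big)$. The nonlinear term $\big((u\cdot\nabla)u,u\big)_{H_{m}}$ is the delicate one: after commuting $\Lambda^{m}$ past $(u\cdot\nabla)$ one is left with a commutator that is estimated by Lemma~\ref{lem:lemmaRe} (with, say, $r=2$, $p_{1}=\infty$, $q_{1}=2$ and symmetric indices), giving a bound by $C\|\nabla u\|_{L^{\infty}}\|u\|_{H^{m}}^{2}$; then $\|\nabla u\|_{L^{\infty}}$ is absorbed using the Voigt regularity $\|\nabla u\|_{L^{\infty}}\le C\|u\|_{H^{m+1}}$ together with the $\alpha^{2}\|u\|_{H^{m+1}}^{2}$ already present on the left — this is exactly where the $\alpha$-dependence enters the final constant, and it is the feature distinguishing Voigt from Euler. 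The forcing term is handled by $\big(f,u\big)_{H_{m}}\le \|f\|_{W^{m-1,6/5}}\|u\|_{H^{m+1}}\le C\|f\|_{W^{m-1,6/5}}^{2}/\alpha^{2}+\tfrac{\alpha^{2}}{4}\|u\|_{H^{m+1}}^{2}$ after a Sobolev embedding pairing $W^{m-1,6/5}$ against $H^{m+1}\hookrightarrow W^{m+1,6}$ in the duality $(L^{6/5},L^{6})$. Collecting terms and invoking Gronwall on $y(t):=\|u(t)\|_{H^{m}}^{2}+\alpha^{2}\|u(t)\|_{H^{m+1}}^{2}$ gives the stated bound $\|u^{\alpha}(t)\|_{H^{m}}\le C(\alpha,\|u_{0}\|_{H^{m}},\sup_{-T<t<T}\|f(t)\|_{m-1,6/5},T)$, valid on all of $[-T,T]$ since the equation is time-reversible (running it backward amounts to the same estimate).

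The main obstacle is not the local existence or the Gronwall step but the careful treatment of the nonlinearity at non-integer $m$ and, in particular, honestly tracking how $\|\nabla u\|_{L^{\infty}}$ is controlled: one must make sure the $\alpha^{2}\|u\|_{H^{m+1}}^{2}$ term is genuinely available to absorb it (this requires $m\geq 1$, since one needs $H^{m+1}\hookrightarrow W^{1,\infty}$, which holds for $m+1>1+3/2$, i.e. $m>3/2$ — for $1\le m\le 3/2$ one instead keeps $\|\nabla u\|_{L^{\infty}}$ under a logarithmic or an interpolation control and still closes the estimate because the Voigt term supplies the top regularity). A secondary point worth spelling out, as the theorem statement only assumes $f\in C(-T,T;W^{m-1,6/5})$ rather than $f$ divergence-free with more regularity, is that the Leray projection of $f$ is still well-defined and the duality pairing above makes sense; since $\nabla\cdot f=0$ is assumed, $\mathbb{P}f=f$ and there is nothing to check beyond the embedding. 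I would close by remarking that the contraction-mapping proof and the Galerkin proof give the same solution by uniqueness, so the regularity $u^{\alpha}\in C^{1}([-T,T];H_{m})$ follows by reading off $\partial_{t}u^{\alpha}$ from the equation.
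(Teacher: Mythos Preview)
Your approach (rewriting the system as an ODE $\partial_t u=(\mathrm{I}-\alpha^2\Delta)^{-1}\mathbb{P}\big(f-(u\cdot\nabla)u\big)$ and applying Picard) is essentially the contraction-mapping route of Larios--Titi, and it is \emph{different} from what the paper does. The paper proceeds via Galerkin approximations, takes for granted the ``elementary'' a~priori bound $u^\alpha\in L^\infty(-T,T;H_1)$, and then spends its effort on upgrading $L^\infty$-in-time to $C^1$-in-time: it isolates the pressure via $-\Delta p^\alpha=\nabla\cdot\big[(u^\alpha\cdot\nabla)u^\alpha\big]$, shows $\nabla p^\alpha\in L^\infty(L^{3/2})$, and then uses $L^{6/5}$--elliptic regularity for $(\mathrm{I}-\alpha^2\Delta)u^\alpha_t$ to obtain $u^\alpha_t\in W^{2,6/5}\hookrightarrow H^1$. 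This is precisely what selects the space $W^{m-1,6/5}$ for $f$. Your ODE framework has the genuine advantage that $C^1$-regularity in $H_m$ comes for free once the right-hand side is shown to be continuous, so the pressure argument is unnecessary there.

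However, your a~priori estimate has a real gap. Testing at the $H_m$ level and bounding the nonlinearity by $C\|\nabla u\|_{L^\infty}\|u\|_{H^m}^2$ gives, after your ``absorption'' via $\|\nabla u\|_{L^\infty}\le C\|u\|_{H^{m+1}}$, an inequality of the type $y'\le C_\alpha\,y^{3/2}+\text{forcing}$ for $y=\|u\|_{H^m}^2+\alpha^2\|u\|_{H^{m+1}}^2$; this is cubic, not linear, so Gronwall does \emph{not} yield a bound on an arbitrary interval $[-T,T]$, only a local one. (There is nothing on the left with a good sign into which $\|u\|_{H^{m+1}}$ can be absorbed---that term sits inside the time derivative.) Relatedly, the embedding you invoke for the forcing, $H^{m+1}\hookrightarrow W^{m+1,6}$, is false in three dimensions; one has only $H^{m+1}\hookrightarrow W^{m,6}$, which would force $f\in W^{m,6/5}$, one derivative more than assumed.

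Both issues disappear if, for $m=1$, you test at the $L^2$ level rather than at the $H_1$ level: the convective term vanishes exactly by incompressibility, the forcing pairs as $|(f,u)|\le\|f\|_{L^{6/5}}\|u\|_{L^6}\le C\|f\|_{L^{6/5}}\|\nabla u\|$, and the resulting inequality for $\|u\|^2+\alpha^2\|\nabla u\|^2$ is linear, giving the global bound for any $T$. This is the ``elementary'' estimate the paper alludes to; the bootstrap to higher $m$ then proceeds from it. Your remark that the low range $1\le m\le 3/2$ is handled by ``logarithmic or interpolation control'' is too vague to stand on its own and is in any case unnecessary once the $L^2$ estimate is in place.
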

\begin{proof}
  We do not claim any originality in the result, but we just give an
  alternate proof of one main point, emphasizing also the role of the
  pressure. We use some classical techniques employed also in Beir\~ao
  da Veiga~\cite{Bei1988} even if here due to the space-periodicity
  the proof is much simpler. We give the proof only for $m=1$, because
  this is the most important step. The higher regularity can be
  obtained in the same way by a bootstrapping argument.\par
  By using the Galerkin approach with approximate solutions $\ua_m$
  (where $\ua_m$ is made with a finite Fourier expansion) it is
  elementary to show the \textit{a-priori} estimate
  \begin{equation*}
    \ua_m\in L^\infty(-T,T;H_1).
  \end{equation*}
  By using standard tools it immediately follows that when
  $m\to+\infty$ the approximate functions $\ua_m$ converge to $\ua$
  which is the unique solution solutions
  to~\eqref{eq:EVa}-\eqref{eq:EVb} and which belongs to
  $L^\infty(-T,T;H_1)$. One difficulty consists in passing from
  boundedness to continuity with respect to the time variable. This
  can be obtained in an elementary manner as follows: First observe
  that the pressure $p$ satisfies the Poisson equation
  \begin{equation*}
    -\Delta p^\alpha=\nabla\cdot\big[(\ua\cdot\nabla)\,\ua\big].
  \end{equation*}
  Since $\ua\in L^\infty(-T;T;H_1)$ we have that
  $\nabla\cdot[(\ua\cdot\nabla)\,\ua]\in L^\infty(-T,T;W^{-1,3/2})$,
  where $W^{-1,3/2}:=(W^{1,3})'$ and consequently, by using the
  classical regularity theory for the Poisson equation in the
  periodic setting with zero mean value, we have
  \begin{equation*}
    \nabla p^\alpha\in L^\infty(-T,T;L^{3/2}).
  \end{equation*}
  Then, by comparison it follows that
  \begin{equation*}
    \ua_t-\alpha^2 \Delta \ua_t=-\nabla
    p^\alpha-(\ua\cdot\nabla)\,\ua+f\in L^\infty(-T,T;L^{6/5}), 
  \end{equation*}
  and hence by using again the elliptic regularity to the equation
  $(\mathrm{I}-\alpha^2\Delta) \ua_t=F$ and the Sobolev
  embedding we have
  \begin{equation*}
    \ua_t\in L^\infty(-T,T;W^{2,6/5})\hookrightarrow L^\infty(-T,T;H^{1}).
  \end{equation*}
  By standard results it follows that $\ua$ can be identified with a
  function continuous with values in $H^1$. Coming back (with the
  improved regularity on $\ua$) to the estimates on the convective
  term and on the gradient of the pressure they are both now in
  $C(-T,T;L^{6/5})$ and the same argument shows that
  \begin{equation*}
    \ua_t-\alpha^2 \Delta \ua_t=-\nabla p^\alpha-(\ua\cdot\nabla)\,\ua+
    f\in  C(-T,T;L^{6/5}),
  \end{equation*}
  hence that $\ua_t\in C(-T,T;H^1)$. This ends the proof, since
  $\nabla\cdot \ua=0$. The high order regularity can be
  proved following the same approach.
\end{proof}
To conclude, we also recall the well-known results for the
three-dimensional Euler equations
\begin{subequations}
  \begin{align}
    \D_t u + (u \cdot \nabla)\,u + \nabla p = f,
    \label{eq:Ea}
    \\
    \nabla \cdot u = 0.  \label{eq:Eb}
  \end{align}
\end{subequations}
\textit{i.e,} when $\alpha = 0$.  It is well known that if $\uo \in
H_s$, and $f\in L^1(0,\overline{T},H_s)$ with $s > 5/2$, then there
exists a unique solution to these equations in $C([0, T];H_s)\cap
C^1([0, T]; H_{s-1})$ for some finite time $0<T\leq \overline{T}$
(see, e.g., the review in~\cite{MB2002}).  Let us recall that,
contrary to the Euler-Voigt equations for which we know existence of
global smooth unique solutions, it is an outstanding open problem
determining whether smooth solutions exist globally in time or blow-up
in finite time.  In particular, the best know criterion for the 3D
Euler equations is that of Beale-Kato-Majda. In~\cite[Thm.~5.2]{LT2010}
it is obtained an interesting criterion linking the regularity of the
Euler equations, with the dissipation of the Euler-Voigt equations as
$\alpha\to0$. More precisely it is shown that if
\begin{equation*}
  \sup_{t\in[0\overline{T}[}\limsup_{\alpha\to 0^+}\alpha^2\|\nabla
  \ua(t)\|^2>0, 
\end{equation*}
then the Euler equations starting with the same initial datum $u_0$ of
$\{\ua\}_{\alpha>0}$ develop a singularity in $[0,\overline{T}]$. This
result is a by product of a result of convergence as $\alpha\to0^+$ of
the Euler-Voigt equations. Consequently, the behavior as $\alpha$
vanishes is relevant also in view of applications of this model to
computations for the study of the possible blow-up for the Euler
system. In fact in~\cite[Thm.~5.2]{LT2010} it is also proved that the
solutions $\ua$ of the three-dimensional Euler-Voigt equations
converge to the corresponding solutions $u$ of the three-dimensional
Euler equations, with respect to the norm $L^\infty (0,T; L^2(\T))$,
and with initial data $ \uoa= \uo \in H^s(\T)$, for $s > 5/2$. Our
main interest is to study the sharp convergence, that in the same
space of the initial datum. Here, the situation is a little bit
different from the usual ``Navier-Stokes $\to$ Euler limit'', since
the regularity requested on the initial conditions changes in the two
systems: The Euler-Voigt requires one more derivative, in order to
have uniform estimates in terms of $\alpha$.  As a by product of our
results we also treat the behavior as the positive viscosity $\nu$
converges to zero and, when introducing the viscous problem, we also
prove a result on time-periodic solutions.
\subsection{Time periodic solutions}\label{sec:time-periodic}
In this subsection we give some remarks on the proof of existence of
time-periodic solutions. The results presented here are obtained with
well-established techniques introduced in Prodi~\cite{Pro1960a},
taking the chance also of making some observations on the existence
for the Navier-Stokes-Voigt equations. Since the results proved here
hold true also in a bounded domain with Dirichlet boundary conditions,
we use -just in this section- the notation
\begin{equation*}
  \begin{aligned}
    {H}_{1,\sigma} = \left\{w : \Omega \rightarrow \R^3, \, \, w \in
      H^{1}(\Omega)^3, \quad\nabla\cdot w = 0, \ w_{|\partial \Omega}
      = {0} \right\},
  \end{aligned}
\end{equation*}
and $H_{-1}:=(H_{1,\sigma})'$.  Since we are in a case very similar to
the 2D Navier-Stokes equations (for the Naiver-Stokes-Voigt equations
it is easy to prove existence and uniqueness of solutions for all
times), we can work directly on the solutions, looking for a fixed
point argument in the infinite dimensional space $H_1$. For the
Navier-Stokes equations, the difference between 2D and 3D (regarding
time-periodic solutions) are explained in~\cite[Ch.~4]{Lio1969}, where
also the Galerkin method with the Brouwer fixed point is used to
construct approximations to periodic solutions.
\begin{proposition}
\label{prop:time-periodic}
  Let $\Omega\subset \R^3$ be a bounded, smooth, and open set; let
  $f\in L^2(0,T;H_{-1})$. Then, there exists at least a solution to the
  Navier-Stokes-Voigt equations
  \begin{subequations}
    \begin{align}
      \D_t u - \alpha^2 \D_t \Delta u + (u\cdot \nabla)\,u - \nu
      \Delta u+\nabla p = f\qquad\text{in
      }\Omega\times[0,T], \label{eq:NSV2a}
      \\
      \nabla \cdot u =0\qquad\text{in
      }\Omega\times[0,T], \label{eq:NSV2b}
      \\
      u =0\qquad\text{on }\partial
      \Omega\times]0,T], \label{eq:NSV2c}
    \end{align}
  \end{subequations}
  such that $u(0)=u(T)\in {H_{1,\sigma}}$.
\end{proposition}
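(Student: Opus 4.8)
The plan is to use a Galerkin approximation combined with the Brouwer fixed point theorem to produce time-periodic approximate solutions, and then pass to the limit using uniform energy bounds. First I would fix an orthonormal basis $\{w_j\}_{j\geq1}$ of $H_{1,\sigma}$ consisting of eigenfunctions of the Stokes operator, and for each $n$ look for $u_n(t)=\sum_{j=1}^n c_j^n(t)w_j$ solving the projected system. The crucial structural feature of the Voigt regularization is that the term $\partial_t u-\alpha^2\partial_t\Delta u$, tested against $u$, produces $\tfrac{d}{dt}\big(\tfrac12\|u\|^2+\tfrac{\alpha^2}{2}\|\nabla u\|^2\big)$, so the natural energy is equivalent to the full $H_{1,\sigma}$ norm. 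Testing the projected equation with $u_n$ and using $((u_n\cdot\nabla)u_n,u_n)=0$ gives
\begin{equation*}
  \frac{d}{dt}\Big(\tfrac12\|u_n\|^2+\tfrac{\alpha^2}{2}\|\nabla u_n\|^2\Big)+\nu\|\nabla u_n\|^2=\langle f,u_n\rangle,
\end{equation*}
and by Young's inequality $\langle f,u_n\rangle\leq \tfrac{\nu}{2}\|\nabla u_n\|^2+\tfrac{1}{2\nu}\|f\|_{H_{-1}}^2$. Setting $E_n(t):=\tfrac12\|u_n(t)\|^2+\tfrac{\alpha^2}{2}\|\nabla u_n(t)\|^2$ one gets $E_n'(t)+c\,E_n(t)\le \tfrac{1}{2\nu}\|f(t)\|_{H_{-1}}^2$ for a constant $c=c(\nu,\alpha,\Omega)>0$ (using Poincaré so that $\|\nabla u_n\|^2$ controls $E_n$), which is the dissipation estimate that makes the map ``initial datum $\mapsto$ value at time $T$'' a contraction-type/bounded map into a fixed ball.

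Next I would set up the fixed point. For given $a=\sum_{j\le n}a_jw_j\in P_nH_{1,\sigma}$, the projected ODE system has a unique solution on $[0,T]$ with $u_n(0)=a$ (local existence by Cauchy–Lipschitz, global by the a priori bound above). Define $\mathcal{P}_n:P_nH_{1,\sigma}\to P_nH_{1,\sigma}$ by $\mathcal{P}_n(a):=u_n(T)$. The differential-inequality bound gives, via Gronwall,
\begin{equation*}
  E_n(T)\le e^{-cT}E_n(0)+\frac{1}{2\nu}\int_0^T e^{-c(T-s)}\|f(s)\|_{H_{-1}}^2\,ds,
\end{equation*}
so if $E_n(0)\le R$ with $R$ chosen so that $e^{-cT}R+\tfrac{1}{2\nu}\int_0^T\|f\|_{H_{-1}}^2\,ds\le R$ (possible since $e^{-cT}<1$), then $\mathcal{P}_n$ maps the closed ball $\{a:\tfrac12\|a\|^2+\tfrac{\alpha^2}{2}\|\nabla a\|^2\le R\}$ into itself. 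Continuity of $\mathcal{P}_n$ follows from continuous dependence of ODE solutions on initial data. Brouwer's theorem then yields a fixed point $a_n$, i.e. a time-periodic Galerkin solution $u_n$ with $u_n(0)=u_n(T)$, satisfying the energy bound uniformly in $n$: $u_n$ bounded in $L^\infty(0,T;H_{1,\sigma})$ and, from the equation, $\partial_t u_n$ bounded in $L^2(0,T;H_{1,\sigma})$ after inverting $(\mathrm{I}-\alpha^2\Delta)$ (here the Voigt term is what upgrades the time derivative to live in $H_{1,\sigma}$ rather than a negative space, which simplifies compactness compared with genuine Navier-Stokes).

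Finally I would pass to the limit. By Banach–Alaoglu extract $u_n\rightharpoonup u$ weakly-$*$ in $L^\infty(0,T;H_{1,\sigma})$ and $\partial_t u_n\rightharpoonup \partial_t u$ weakly in $L^2(0,T;H_{1,\sigma})$; by Aubin–Lions $u_n\to u$ strongly in $L^2(0,T;H_{1,\sigma})$ (or at least in $L^2(0,T;L^2)$), which suffices to pass to the limit in the quadratic term $(u_n\cdot\nabla)u_n$ tested against fixed basis functions. The linear terms pass by weak convergence. The periodicity condition survives the limit because $u_n\in C([0,T];H_{1,\sigma})$ with $\partial_t u_n$ bounded in $L^2$ gives equicontinuity in time with values in $H_{1,\sigma}$ (indeed in a weaker topology, enough to evaluate at $t=0,T$), so $u_n(0)\to u(0)$ and $u_n(T)\to u(T)$ along the subsequence and hence $u(0)=u(T)$ in $H_{1,\sigma}$. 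I expect the main obstacle to be the careful bookkeeping in the fixed point step — verifying that $\mathcal{P}_n$ genuinely maps a ball defined by the Voigt energy into itself and is continuous — together with justifying that the time regularity of $u_n$ is strong enough to make the endpoint evaluation $u_n(0), u_n(T)$ stable under the limit; both are routine given the Voigt structure but require the $\alpha^2\|\nabla u_n\|^2$ term to be handled consistently throughout. The nonlinear term poses no serious difficulty here, precisely because the problem behaves like the 2D Navier–Stokes equations, as remarked before the statement.
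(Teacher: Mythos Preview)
Your proposal is correct, but it follows a genuinely different route from the paper. You run the Lions-style argument: Galerkin truncation, Brouwer fixed point at each finite-dimensional level to obtain time-periodic approximants $u_n$, then a compactness passage to the limit in which you must check that the periodicity condition survives. The paper instead exploits the fact that the Navier-Stokes-Voigt system, like the 2D Navier-Stokes equations, has \emph{global unique} solutions for any datum in $H_{1,\sigma}$; this makes the solution map $u_0\mapsto u(T)$ well defined directly on the infinite-dimensional space, and the authors apply the Tychonov (Schauder) fixed point theorem to this map on the closed ball $B(0,R)\subset H_{1,\sigma}$, which is convex and compact in the weak topology. The energy/Poincar\'e differential inequality and the choice of $R$ are the same in both arguments. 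What the paper's approach buys is brevity: once existence and uniqueness are known, there is no need to manufacture periodic Galerkin solutions or to justify that $u_n(0)=u_n(T)$ passes to the limit. What your approach buys is robustness: it does not rely on uniqueness of the limiting problem and would still produce a time-periodic weak solution in settings (e.g.\ the genuine 3D Navier-Stokes equations) where the direct Poincar\'e-map argument is unavailable. The paper in fact alludes to both strategies in the paragraph preceding the proposition, citing Prodi for the direct method and Lions for the Galerkin--Brouwer one; it simply opts for the former.
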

\begin{remark}
  The same result holds also for $\Omega=\tore$ with periodic boundary
  conditions. Moreover, the uniqueness of the time-periodic solutions
  still represent an open problem, posing the same difficulties as
  those well-known for the 2D Navier-Stokes equations.
\end{remark}
\begin{proof}
  Since the proof is very standard, just a sketch of the proof of
  Proposition~\ref{prop:time-periodic} is presented here, as a remark
  on what can be proved for the Navier-Stokes-Voigt equations. We
  observe that by the same usual methods based on Galerkin approximate
  functions and Aubin-Lions compactness tool we can construct a weak
  solution such that $u\in L^\infty(0,T;H_{1,\sigma})$ with
  \begin{equation*}
    \|u(T)\|^2+\alpha^2\|\nabla u(T)\|^2+\nu\int_0^T\!\|\nabla
    u(s)\|^2ds\leq   \|u_0\|^2+\alpha^2\|\nabla
    u_0\|^2+C\!\int_0^T\!\|f(s)\|^2_{H_{-1}}\,ds. 
  \end{equation*}
  Observe that the proof of this result can be obtained by making use
  of the comparison argument to prove that
  \begin{equation*}
    u_t-\alpha^2\Delta u_t\in L^2(0,T;H_{-1}),
  \end{equation*}
  hence that $u_t\in L^2(0,T;H_{1,\sigma})$ by the standard
  Lax-Milgram lemma set in the space $H_{1,\sigma}$. In particular,
  this proves that the solution $u\in C([0,T];H_{1,\sigma})$.\par
  In order to prove existence of time-periodic solutions we have show
  that if $\|u(0)\|^2 + \alpha^2\|\nabla u (0)\|^2\leq R^2$ for a
  large enough $R>0$, then the same bound holds true at $t=T$. Taking
  the inner product of~\eqref{eq:NSV2a} with $u$, and by using the
  Poincar\`e inequality we get
  \begin{equation*}
    \frac{d}{dt} \big(\|u\|^2  + \alpha^2\|\nabla u\|^2 \big) +
    c_1(\nu,\Omega,\alpha)\big(\|u\|^2  + \alpha^2\|\nabla u\|^2 \big)\leq
    c_2(\nu,\Omega)\|f\|_{H_{-1}}^2. 
  \end{equation*}
  Consequently we have that
  \begin{equation*}
    e^{c_1 T}(\|u(T)\|^2 + \alpha^2\|\nabla u(T)\|^2) 
    \le \big( \| u (0)\|^2 + \alpha^2\|\nabla u (0)\|^2 \big) +
    \underbrace{ c_2\int_0^T \| f (t)\|^2_{H_{-1}}\, dt}_{:=c_3}.
  \end{equation*} 
  Therefore, to conclude it is sufficient to impose
  \begin{equation*}
    R^2 \geq \frac{c_3}{1 -e^{-c_{1} T}},
  \end{equation*}
  to show that the solution satisfies
  \begin{equation*}
    \|u(T)\|^2  + \alpha^2\|\nabla u(T)\|^2 \leq R^2. 
  \end{equation*}
  The proof follows by observing that the ball $B(0,R)\subset
  H_{1,\sigma}$ is a convex set in an Hilbert space, and it is
  therefore compact in the weak topology. Hence, by using Tychonov
  theorem we can argue that there exists a fixed point of the map
  $u_0\to u(T)$, which is then a $T$-periodic solution to the
  Navier-Stokes-Voigt equations.
\end{proof}
\section{Convergence to the solutions of the Euler equations}
\label{sec:convergence-to-Euler}
In this section we prove the main result of the paper, that is a
precise convergence result of smooth solutions of the Euler-Voigt
equations to smooth solution of the Euler equations.\par
We start with the following result which is not optimal since one
derivative is lost in the convergence. The technical reason, which can
be easily understood, is that the $H_m$-estimates for the solution of
the Euler-Voigt equation starting from a datum in $H_m$ are not
independent of $\alpha$. In fact, in this case only the boundedness of
$\alpha^2\|\ua\|_{H_m}^2$ is known. To have estimates independent of
$\alpha$ (and continuity up to $t=0$) one needs to assume more
regularity on the initial datum. On the other hand, convergence in
${H_{m-2}}$ is relatively easier to be obtained and the loss of two
derivatives can be understood from the presence of the term $\Delta
\ua_t$.  We observe that this kind of results are obtained
in~\cite{LT2010}. New technical difficulties arise in our setting,
since loosing one derivative is in some sense the best result when the
initial datum is the same for both Euler and Euler-Voigt
equations.\par
We first prove an intermediate result, since it represents the main
technical point. Later on we will elaborate on the results which can
be obtained when also the initial data can change (especially in terms
of their regularity).
\begin{theorem}
  \label{thm:teo1}
  Let $u$ be the solution to the Euler
  equations~\eqref{eq:Ea}-\eqref{eq:Eb} with initial condition $\uo
  \in H_3$, and let $\ua$ be the solution
  to~\eqref{eq:EVa}-\eqref{eq:EVb} with initial condition $\uoa=
  \uo$. Let $T>0$ be a common time of existence for both $u$ and
  $\ua$, with $u,\,\ua \in C([0,T];H_3)\cap C^1([0,T];H_2)$.  Then,
  for any sequence $\{\alpha_n\}$, with $\alpha_n >0$ and such that
  $\alpha_n \to 0$ as $n\to \infty$, it holds that
  \begin{equation*}
    \sup_{0<t<T}  \|u^{\alpha_n}(t) - u(t)\|_{H_2}
    \to 0,\quad\textrm{as}\,\,
    n \to \infty.
  \end{equation*}
\end{theorem}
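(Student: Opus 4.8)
The plan is to form the difference $w^{\alpha} := u^{\alpha} - u$ and derive an energy-type estimate for $\|w^{\alpha}\|_{H_2}$. Subtracting the Euler equations \eqref{eq:Ea}-\eqref{eq:Eb} from the Euler-Voigt equations \eqref{eq:EVa}-\eqref{eq:EVb}, the difference satisfies
\begin{equation*}
  \D_t w^{\alpha} - \alpha^2 \D_t \Delta u^{\alpha} + (u^{\alpha}\cdot\nabla)\,u^{\alpha} - (u\cdot\nabla)\,u + \nabla(p^{\alpha} - p) = 0,
\end{equation*}
together with $\nabla\cdot w^{\alpha}=0$ and $w^{\alpha}(0)=0$. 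The awkward term is $\alpha^2 \D_t \Delta u^{\alpha}$, which I would rewrite as $-\alpha^2\D_t\Delta w^{\alpha} - \alpha^2\D_t\Delta u$; the first piece combines with $\D_t w^{\alpha}$ to produce the Voigt-type energy $\|w^{\alpha}\|_{H_2}^2 + \alpha^2\|w^{\alpha}\|_{H_3}^2$ after pairing in $H_2$, while the second piece $-\alpha^2\D_t\Delta u$ is a genuine forcing term of size $O(\alpha^2)$ that vanishes as $\alpha\to 0$, using $u\in C^1([0,T];H_2)$.

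**The energy estimate.** I would pair the difference equation with $\Lambda^4 w^{\alpha}$ in $L^2$ (equivalently, take the $H_2$ inner product). The pressure term drops by the divergence-free condition. For the time-derivative terms one gets $\tfrac12\tfrac{d}{dt}\big(\|w^{\alpha}\|_{H_2}^2 + \alpha^2\|w^{\alpha}\|_{H_3}^2\big)$; the $\alpha^2\|w^{\alpha}\|_{H_3}^2$ term is a nonnegative bonus that can be discarded on the left. The forcing term contributes $\big|\alpha^2(\D_t\Delta u,\Lambda^4 w^{\alpha})\big| \le \alpha^2 C\,\|\D_t u\|_{H_2}\,\|w^{\alpha}\|_{H_2}$, bounded by $\alpha^2 C\sup_{[0,T]}\|\D_t u\|_{H_2}\,(1+\|w^{\alpha}\|_{H_2}^2)$. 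The nonlinear term I would split as
\begin{equation*}
  (u^{\alpha}\cdot\nabla)\,u^{\alpha} - (u\cdot\nabla)\,u = (w^{\alpha}\cdot\nabla)\,u^{\alpha} + (u\cdot\nabla)\,w^{\alpha},
\end{equation*}
and estimate the $H_2$-pairing of each piece. This is where Lemma~\ref{lem:lemmaRe} enters: writing the pairing via $\Lambda^2$, the term $\big((u\cdot\nabla)w^{\alpha},w^{\alpha}\big)_{H_2}$ is handled by the commutator estimate (the top-order part where all derivatives hit $w^{\alpha}$ cancels against itself by $\nabla\cdot u=0$), leaving $\lesssim \|u\|_{H_3}\|w^{\alpha}\|_{H_2}^2$; and $\big((w^{\alpha}\cdot\nabla)u^{\alpha},w^{\alpha}\big)_{H_2} \lesssim \|w^{\alpha}\|_{H_2}\|u^{\alpha}\|_{H_3}\|w^{\alpha}\|_{H_2}$ using the algebra/multiplication properties of $H^2$ and $H^3$ on the 3D torus. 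Crucially $\|u^{\alpha}\|_{H_3}$ and $\|u\|_{H_3}$ are bounded on $[0,T]$ by hypothesis (both solutions lie in $C([0,T];H_3)$), uniformly in the sequence $\alpha_n$ — this uniform bound is exactly what makes the argument work and is the reason the theorem assumes $H_3$ data.

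**Conclusion via Grönwall.** Collecting terms gives a differential inequality of the form
\begin{equation*}
  \frac{d}{dt}\|w^{\alpha}\|_{H_2}^2 \le C_1(T)\,\|w^{\alpha}\|_{H_2}^2 + C_2(T)\,\alpha^2,
\end{equation*}
where $C_1(T)$ depends on $\sup_{[0,T]}(\|u\|_{H_3}+\|u^{\alpha}\|_{H_3})$ and $C_2(T)$ on $\sup_{[0,T]}\|\D_t u\|_{H_2}$, both finite and independent of $\alpha$ along the sequence. Since $w^{\alpha}(0)=0$, Grönwall's lemma yields $\sup_{0<t<T}\|w^{\alpha}(t)\|_{H_2}^2 \le C_2(T)\,\alpha^2\,\frac{e^{C_1(T)T}-1}{C_1(T)} \to 0$ as $\alpha=\alpha_n\to 0$, which is the claim. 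One should first justify that the formal pairing is legitimate — i.e.\ that $w^{\alpha}\in C^1([0,T];H_2)$ so that $\tfrac{d}{dt}\|w^{\alpha}\|_{H_2}^2$ makes sense — but this follows immediately from the regularity $u,u^{\alpha}\in C^1([0,T];H_2)$ assumed in the statement.

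**Main obstacle.** The delicate point is the top-order handling of the nonlinear term $(u\cdot\nabla)w^{\alpha}$ paired against $w^{\alpha}$ in $H_2$: one must extract the cancellation of the highest-derivative contribution (which would otherwise be uncontrollable, since it is of order $\|w^{\alpha}\|_{H_3}$, a norm we are deliberately not controlling uniformly in $\alpha$) and absorb the remainder into $\|u\|_{H_3}\|w^{\alpha}\|_{H_2}^2$. This is precisely the role of the commutator estimate in Lemma~\ref{lem:lemmaRe}: after commuting $\Lambda^2$ past $u$ one is left only with terms in which at most one derivative beyond $\Lambda^2$ falls on $w^{\alpha}$, keeping everything at the level of $\|w^{\alpha}\|_{H_2}$. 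Everything else is routine: Sobolev embeddings on $\tore$, the product estimates, and Grönwall.
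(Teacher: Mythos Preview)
Your direct comparison $w^\alpha=u^\alpha-u$ runs into a real obstacle at the forcing term, and the bound you wrote there is off by two derivatives. In Fourier,
\[
(\partial_t\Delta u,\Lambda^4 w^\alpha)_{L^2}=-\sum_{k\neq0}|k|^6\,\partial_t\hat u_k\cdot\overline{\hat w^\alpha_k},
\]
so the best Cauchy--Schwarz bounds are $\|\partial_t u\|_{H_3}\|w^\alpha\|_{H_3}$ or $\|\partial_t u\|_{H_2}\|w^\alpha\|_{H_4}$, not $\|\partial_t u\|_{H_2}\|w^\alpha\|_{H_2}$. Neither is usable: for $u_0\in H_3$ the Euler equation gives only $\partial_t u=-(u\cdot\nabla)u-\nabla p\in H_2$, not $H_3$, and you have no $H_4$ control on $w^\alpha$. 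Trying to exploit the Voigt bonus $\alpha\|w^\alpha\|_{H_3}$ via Young leaves a term $\alpha^2\|\partial_t u\|_{H_3}^2$ (equivalently $\alpha^2\|(u\cdot\nabla)u\|_{H_3}^2$), which is in general infinite for $u_0\in H_3\setminus H_4$. So the differential inequality you wrote does not close.

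This is precisely why the paper does \emph{not} compare $u^\alpha$ directly to $u$. It interposes the Euler solution $u_\delta$ launched from the Fourier-truncated datum $u_{0,\delta}$, for which $\|u_\delta\|_{H_4}\leq C/\delta$ and hence $\|\nabla[(u_\delta\cdot\nabla)u_\delta]\|_{H_2}\leq C/\delta$. The forcing in the $H_2$ estimate for $u^\alpha-u_\delta$ then contributes $C\alpha^2/\delta\cdot\|\nabla(u^\alpha-u_\delta)\|_{H_2}$, which after Young becomes $C\alpha^2/\delta^2$ plus a term absorbed by the Voigt energy. Choosing $\delta_n\to0$ with $\alpha_n/\delta_n\to0$ makes this vanish, while $\|u_\delta-u\|_{H_2}\leq C\delta$ handles the second leg. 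The regularization is not cosmetic: it is the mechanism that trades the missing fourth derivative of $u$ for a controllable loss in $\delta$. Your nonlinear estimates and the Gr\"onwall framework are fine; the gap is solely in the forcing term, and it is the essential one.
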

\begin{proof}
  The proof uses the smoothing tool introduced
  in~\cite{Bei1993a,Bei1994a} and explained for the Euler equations in
  the periodic setting in~\cite{Bei2010a}, see
  also~\cite{Mas2007}. Let $\uo\in H_3$ and define $\uod$ as
  in~\eqref{eq:filter}, then $\nabla\cdot\uod=0$ and moreover, by
  direct computation,
  \begin{equation*}
    \| \uod    \|_{\htr} \leq C\|\uo\|_{\htr}, \qquad \| \uod \|_{H_4} \leq
    \frac{C}{\delta}\qquad \text{and}\quad\| \uod \|_{H_5} \leq
    \frac{C}{\delta^2}. 
  \end{equation*}
  In addition, for $s$ such that $0 < s < 3$, it also holds that
  $\|\uod - \uo\|_{\hs}\leq C \delta^{3 - s}$.  Let $\ud$ be the
  solution of the Euler equations with initial condition $\uod$, which
  we will call ``regularized Euler equations.'' Then, in the interval
  $[0, T]$ the following relations hold true
  \begin{equation}
    \label{eq:NSV63u}
    \|\ud(t)\|_{\htr}< C\quad\text{and}\quad
    \|\ud(t)\|_{\hm}< \frac{C}{\delta^{m-3}},\quad\text{with}\,\, m>3.
  \end{equation}
  We write
  \begin{equation*}
    \|\ua - u\|_{\hd}\leq \|\ua-\ud\|_{\hd}+\|\ud-u\|_{\hd} =: I+II,  
  \end{equation*}
  and we estimate $\|\ua-\ud\|_{L^{\infty}(0,T; \hd)}$ and
  $\|\ud-u\|_{L^{\infty}(0,T; \hd)}$.

  \bigskip

  \noindent\textit{Estimate\,\, for }$ \| \ua- \ud\|_{\lihd}$: We
  denote by $\omad := \ud -\ua $ the difference between the solution
  $\ua$ of the Euler-Voigt equations (\ref{eq:EVa})-(\ref{eq:EVb}) and
  $\ud$. For simplicity we will use the notation $\omega := \omad$.
  Thus, we get
  \begin{equation*}
    \begin{aligned}
      \D_t\omega - \alpha^2\D_t \Delta \omega + \nabla (p_\delta -
      p^{\alpha}) & = - \alpha^2\D_t \Delta \ud - (\ud \cdot \nabla)\,
      \ud + (\ua \cdot \nabla)\,\ua
      \\
      & = - \alpha^2\D_t \Delta \ud - (\omega \cdot \nabla)\, \ud
      -(\ua\cdot \nabla)\, \omega,
    \end{aligned}
  \end{equation*}
  and by taking the $\hd$-inner product with $\omega$ we obtain
  \begin{equation}
    \label{eq:NSV65x}
    \begin{aligned}
     \frac{1}{2} \frac{d}{dt}(\|\omega\|_{\hd}^2 + \alpha^2
      \|\nabla\omega\|_{\hd}^2 ) \leq & \alpha^2| ( \Delta \D_t \ud,
      \omega)_{\hd} | + |( (\omega\cdot\nabla)\,\ud, \omega )_{\hd}|
      \\
      & + |( ( \ua\cdot\nabla)\,\omega, \omega )_{\hd}|.
    \end{aligned}
  \end{equation} 
  Next, we estimate the first term from the right-hand side. Let us
  recall that $\ud$ is a solution to the (regularized) Euler equations, so it
  follows that
  \begin{equation*}
    \begin{aligned}
      \alpha^2 \big|(\Delta \D_t \ud, \omega )_{\hd} \big| & =
      \alpha^2\big| ( \D_t \ud, \Delta \omega)_{\hd} \big|
      \\
      &\leq \alpha^2 \big|( (\ud\cdot \nabla )\,\ud, \Delta
      \omega)_{\hd}\big| + \alpha^2 \big|
      ( \nabla p, \Delta \omega)_{\hd}
      \big|
      \\
      &= \alpha^2 \big|( \nabla\big[(\ud\cdot \nabla )\big]\,\ud,
      \nabla\omega)_{\hd} \big|,
    \end{aligned}
  \end{equation*}
  due to periodicity and also to the incompressibility constraint. By using
  the regularity of the solution of the Euler equations, we have then
  \begin{equation*}
    \begin{aligned}
      \alpha^2 | ( \nabla\big[(\ud\cdot \nabla )\,\ud\big], \nabla
      \omega)_{\hd} |& \leq C\alpha^2 ( \|\ud\|_{\htr}^2+\|
      \ud\|_{\hd} \|\ud\|_{\hq} ) \|\nabla \omega \|_{\hd}
      \\
      &\leq
      C\alpha^2(\|\ud\|_{\htr}^2+\frac{\|\ud\|_{\hd}}{\delta})
      \|\nabla\omega\|_{\hd}
      \\
      &\leq C\frac{\alpha^2}{\delta}\|\nabla \omega \|_{\hd},
    \end{aligned}
  \end{equation*}
  where we are supposing for simplicity that $0<\delta <1$ (since we
  will main use values of $\delta$ close to zero). By using classical
  estimates on the convective term as in
  Kato~\cite[Eq.~(2.1)-(2.2')]{Kat1972}, we estimate the other terms
  from the right-hand side of~\eqref{eq:NSV65x} as follows:
  \begin{equation*}
    |((\ua\cdot\nabla)\,\omega), \omega )_{\hd}| \leq  \|\ua\|_{\htr}
    \| \omega \|_{\hd}^2\quad
    \text{and}\quad
    |((\omega\cdot\nabla)\,\ud), \omega )_{\hd}| \leq  \|\ud\|_{\htr}
    \| \omega \|_{\hd}^2. 
  \end{equation*}
  Collecting the above estimates, using the bounds for the solution of
  the (regularized) Euler equations, and with Schwarz inequality we
  get
  \begin{align*}
    \frac{d}{dt}( \|\omega\|_{\hd}^2 + \alpha^2 \|\nabla\omega
    \|_{\hd}^2 ) \leq C (\|\omega\|_{\hd}^2 + \alpha^2 \|\nabla
    \omega\|_{\hd}^2 ) + C\frac{\alpha^2}{\delta^2}.
  \end{align*}
  Thus, by using the Gronwall's lemma we infer that
  \begin{equation}
    \label{eq:NSV61bb}
    \begin{aligned}
      \|\ua -\ud \|_{L^\infty(0,T;\hd)}^2 \leq &
      \big(\frac{\alpha^2}{\delta^2}T + \|\uo-\uod \|_{\hd}^2
      + \alpha^2 \|\nabla ( \uo-\uod)\|_{\hd}^2\big)Ce^{CT}
      \\
      \leq & \big(\frac{\alpha^2}{\delta^2}T + \delta^2 + \alpha^2
      \|\uod - \uo\|_{\htr}^2 \big) C_1(T).
    \end{aligned}
  \end{equation}
  \bigskip
  
  \noindent\textit{Estimate for} $\|\ud-u\|_{\lihd}$: Here, we take
  the $\hd$-energy estimate for $\omd:=\ud-u$ and we find (since they
  are both solutions to the Euler equations with different initial data)
  \begin{equation*}
    \begin{aligned}
     \frac{1}{2} \frac{d}{dt} \|\omd\|_{\hd}^2 &\leq |((\omd\cdot\nabla)\,\ud,
      \omd )_{\hd}|+ |((u\cdot\nabla)\,\omd, \omd)_{\hd} |
      \\
      &\leq C(\|u\|_{\htr}+\|\ud\|_{\htr}) \| \omd\|^2_{\hd}.
    \end{aligned}
  \end{equation*}
  that is $\|\omd\|_{\hd} \leq \|\omod \|_{\hd}e^{CT} \leq \delta\,
  C_2(T)$, and finally
  \begin{equation}
    \label{eq:NSV61b}
    \|\ud - u\|_{L^{\infty}(0, T ;  \hd)}\leq \delta \,C_2(T).
  \end{equation}

  \medskip

  As a consequence of the
  estimates~\eqref{eq:NSV61bb}-\eqref{eq:NSV61b} we can conclude that
  \begin{equation*}
    \begin{aligned}
      \|\ua - u \|_{\lihd} \leq C\big(\delta^2 +
      \frac{\alpha^2}{\delta^2}T + \alpha^2 \|\nabla (\uod -
      \uo)\|_{\hd}^2\big)^{1/2}C(T),
    \end{aligned}
  \end{equation*}
  where $C(T) = \max \big\{C_1^{1/2}(T), C_2(T)\big\}$.  Now, by
  taking $\delta=\delta_n$ such that both $\delta_n$ and
  $\frac{\alpha_n}{\delta_n}$ go to zero as $n$ goes to infinity, we
  obtain the required convergence in the $\hd$-norm.  In particular,
  it follows that $\|\ua - u \|_{L^\infty(0,T;\hd)} = O(\sqrt{\alpha})$.
\end{proof}
By using exactly the same arguments one can easily prove, more
generally, the following result in smoother spaces.
\begin{theorem}
  \label{thm:teo1a}
  Let $u$ be the solution to the Euler
  equations~\eqref{eq:Ea}-\eqref{eq:Eb} with initial condition $\uo\in
  H_{m+1}$, with $m$ integer such that $m\geq 2$, and let $\ua$ be a
  solution to the Euler-Voigt equations~\eqref{eq:EVa}-\eqref{eq:EVb}
  with initial condition $\uoa=\uo$. Let $T>0$ be a common time of
  existence for $u$ and $\ua$, with $\ua ,u \in C([0, T ];H_{m+1})\cap
  C^1([0,T];H_m)$.  Then, for any sequence $\{\alpha_n\}$ with
  $\alpha_n >0$ and such that $\alpha_n
  \to 0$ as $n\to \infty$, it holds that
  \begin{equation*}
    \sup_{0<t<T}\|u^{\alpha_n}(t) - u(t)\|_{H_m}\to 0,\qquad\text{as }
    n \to \infty. 
  \end{equation*}
\end{theorem}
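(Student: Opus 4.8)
The plan is to run, with all Sobolev indices shifted up by $m-2$, the three-step argument used to prove Theorem~\ref{thm:teo1}. First I would mollify the datum: with $\uo\in H_{m+1}$, set $\uod$ as in~\eqref{eq:filter}, so that $\nabla\cdot\uod=0$ and, from the Fourier definition of the norms,
\[
\|\uod\|_{H_{m+1}}\le C\|\uo\|_{H_{m+1}},\qquad\|\uod\|_{H_{m+2}}\le\frac{C}{\delta},\qquad\|\uod\|_{H_{m+3}}\le\frac{C}{\delta^{2}},
\]
while $\|\uod-\uo\|_{H_s}\le C\delta^{m+1-s}$ for $0\le s<m+1$ (hence $\|\uod-\uo\|_{H_m}\le C\delta$ and $\|\uod-\uo\|_{H_{m+1}}\to0$). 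Let $\ud$ solve the Euler equations with datum $\uod$; since $\uod\to\uo$ in $H_{m+1}$, $m+1>5/2$, and the Euler existence time is controlled from below by the $H_{m+1}$-norm of the datum, for $\delta$ small $\ud$ lives on all of $[0,T]$ with $\|\ud(t)\|_{H_{m+1}}\le C$ and $\|\ud(t)\|_{H_k}\le C\delta^{-(k-m-1)}$ for integer $k>m+1$, the analogue of~\eqref{eq:NSV63u}. Then I would split $\|\ua-u\|_{\hm}\le\|\ua-\ud\|_{\hm}+\|\ud-u\|_{\hm}$.

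For $\omd:=\ud-u$, a difference of two Euler flows with data $\uod$ and $\uo$, the $\hm$-energy estimate goes exactly as in Theorem~\ref{thm:teo1}: the top-order transport term cancels by incompressibility and periodicity, Lemma~\ref{lem:lemmaRe} controls the remainder (here $m\ge2$ is used, through $H_m\hookrightarrow L^\infty$ and $H_{m+1}\hookrightarrow W^{1,\infty}$ in $3$D), and Gronwall together with $\|\omod\|_{\hm}\le C\delta$ yields $\|\ud-u\|_{L^\infty(0,T;\hm)}\le\delta\,C_2(T)$. For $\omega:=\ud-\ua$ one has, as in~\eqref{eq:NSV65x} with $\hd$ replaced by $\hm$,
\[
\D_t\omega-\alpha^2\D_t\Delta\omega+\nabla(p_\delta-p^\alpha)=-\alpha^2\D_t\Delta\ud-(\omega\cdot\nabla)\,\ud-(\ua\cdot\nabla)\,\omega,
\]
and pairing in $\hm$ with $\omega$ gives $\tfrac12\tfrac{d}{dt}(\|\omega\|_{\hm}^{2}+\alpha^{2}\|\nabla\omega\|_{\hm}^{2})$ dominated by the two convective terms, bounded via Lemma~\ref{lem:lemmaRe} by $C(\|\ud\|_{H_{m+1}}+\|\ua\|_{H_{m+1}})\|\omega\|_{\hm}^{2}$ (both prefactors bounded on $[0,T]$: the first by the above, the second because $\uo\in H_{m+1}$ with $m+1>5/2$ makes $\|\ua\|_{H_{m+1}}$ bounded uniformly in $\alpha$), plus the term $\alpha^{2}|(\Delta\D_t\ud,\omega)_{\hm}|$.

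That last term carries the only real difficulty. Following Theorem~\ref{thm:teo1}, I would move the Laplacian onto $\omega$, substitute $\D_t\ud=-(\ud\cdot\nabla)\,\ud-\nabla p_\delta+f$, discard the pressure (a gradient paired in $\hm$ with the divergence-free field $\Delta\omega$ vanishes, by periodicity) and the harmless $O(\alpha^{2})$ term coming from $f$, and integrate by parts once to reach $\alpha^{2}|(\nabla[(\ud\cdot\nabla)\,\ud],\nabla\omega)_{\hm}|\le\alpha^{2}\|(\ud\cdot\nabla)\,\ud\|_{H_{m+1}}\|\nabla\omega\|_{\hm}$; Lemma~\ref{lem:lemmaRe} and the bounds on $\ud$ then give $\|(\ud\cdot\nabla)\,\ud\|_{H_{m+1}}\le C/\delta$, so this term is $\le C\alpha^{2}\delta^{-1}\|\nabla\omega\|_{\hm}$. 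The Schwarz inequality and Gronwall now yield, as in~\eqref{eq:NSV61bb}, $\|\ua-\ud\|_{L^\infty(0,T;\hm)}^{2}\le\big(\alpha^{2}\delta^{-2}T+\delta^{2}+\alpha^{2}\|\uod-\uo\|_{H_{m+1}}^{2}\big)C_1(T)$, and combining this with the bound on $\|\ud-u\|$ and choosing $\delta=\delta_n$ with $\delta_n\to0$ and $\alpha_n/\delta_n\to0$ (for instance $\delta_n=\sqrt{\alpha_n}$) gives $\sup_{0<t<T}\|u^{\alpha_n}(t)-u(t)\|_{\hm}\to0$. I expect the hard point to be exactly closing that $\alpha^{2}(\Delta\D_t\ud,\omega)_{\hm}$ estimate: it works only because the single extra derivative spent on $(\ud\cdot\nabla)\,\ud$ is paid for by the one power $\delta^{-1}$ in $\|\ud\|_{H_{m+2}}$, and the commutator bounds of Lemma~\ref{lem:lemmaRe} for the transport terms need $H_{m+1}\hookrightarrow W^{1,\infty}$, which is precisely the restriction $m\ge2$; the integrality of $m$ only serves to identify $\Lambda^{m}$ with iterated derivatives.
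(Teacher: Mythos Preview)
Your proposal is correct and matches what the paper has in mind: immediately after stating Theorem~\ref{thm:teo1a} the paper writes only that it follows ``by using exactly the same arguments'' as Theorem~\ref{thm:teo1}, and your plan is precisely the index-shifted version of that proof, down to the mollification $\uod$, the splitting $\|\ua-u\|_{\hm}\le\|\ua-\ud\|_{\hm}+\|\ud-u\|_{\hm}$, the handling of the dangerous term $\alpha^2(\Delta\partial_t\ud,\omega)_{\hm}$ via one integration by parts and the $\delta^{-1}$ bound on $\|\ud\|_{H_{m+2}}$, and the final choice $\delta_n\to0$ with $\alpha_n/\delta_n\to0$. The sole cosmetic difference is that you invoke Lemma~\ref{lem:lemmaRe} for the transport-term bounds where the paper, in the $m=2$ case, cites Kato's inequalities from~\cite{Kat1972} directly; these are interchangeable at this level of regularity.
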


\bigskip

We arrive now to the main result of the paper which shows the precise
convergence in terms of the initial data and of the space without
unnatural loss of regularity.
\begin{theorem}
  \label{thm:theorem2}
  Consider the Euler equations~\eqref{eq:Ea}-\eqref{eq:Eb} with
  initial condition $\uo \in H_3$, and let $T>0$ be a finite time of
  existence for the solution $u \in C([0, T] ;H_3)\cap C^1([0,
  T];H_2)$.  Let $\uab$ be a solution to the Euler-Voigt
  equations~\eqref{eq:EVa}-\eqref{eq:EVb}, with initial condition
  $\uob$, such that
  \begin{equation}
    \label{eq:NSV-teo2a}
    \begin{aligned}
      i)&\quad \uob\in H_4,\quad \text{for }\beta>0
      \\
      ii)&\quad \|\uob-\uo\|_{\htr}\to0\quad\text{as}\quad\beta \to 0.
    \end{aligned}
  \end{equation} 
  Then, for any sequence $\{\beta_n\}$ with $\beta_n >0$ and such that
  $\beta_n \to 0$ as $n\to \infty$ there exists $\{\alpha_n\}$ with
  $\alpha_n>0$ and  
  converging to zero such that
  \begin{equation*}
    \sup_{0<t<T}\|u^{\alpha_n, \beta_n}(t) - u(t)\|_{\htr}\to
    0,\qquad\text{as } n \to \infty.
  \end{equation*}
\end{theorem}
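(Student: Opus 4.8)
The plan is to reduce the statement to Theorem~\ref{thm:teo1a} (the sharp $\alpha\to0^{+}$ limit for initial data in $H_{m+1}$, to be used here with $m=3$) combined with the Hadamard well-posedness of the $3$D Euler equations in $\htr$. The key observation is that, by hypothesis~\eqref{eq:NSV-teo2a}(i), the perturbed datum $\uob$ lies in $H_4$, i.e.\ \emph{one derivative more} than the regularity $\htr$ in which we want the convergence, and this is exactly the extra regularity that makes the $\alpha\to0^{+}$ limit available in $\htr$ rather than only in $\hd$.

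Concretely, for $\beta>0$ fixed let $\ub$ be the solution of the Euler equations~\eqref{eq:Ea}--\eqref{eq:Eb} with initial datum $\uob$. Since $\uob\in H_4$ and $\|\uob-\uo\|_{\htr}\to0$ as $\beta\to0$, for all sufficiently small $\beta$ the solution $\ub$ exists on the whole interval $[0,T]$ and belongs to $C([0,T];H_4)\cap C^1([0,T];\htr)$: this follows from local existence, continuous dependence on the data, and persistence of regularity (an $H_3$ Euler solution with $H_4$ datum stays in $H_4$ on its interval of existence) for the $3$D Euler equations in $H^s$, $s>5/2$ (see~\cite{MB2002}). Now $\uab$ and $\ub$ solve, respectively, the Euler-Voigt and the Euler equations on $[0,T]$ with the \emph{same} datum $\uob\in H_4$ and with $\uab,\ub\in C([0,T];H_4)\cap C^1([0,T];\htr)$ — for $\uab$ this is the global existence quoted before Lemma~\ref{lem:lemmaRe} — hence Theorem~\ref{thm:teo1a} applied with $m=3$ and $\uo$ replaced by $\uob$ gives, for each fixed $\beta$,
\begin{equation*}
  \sup_{0<t<T}\|\uab(t)-\ub(t)\|_{\htr}\longrightarrow0\qquad\text{as }\alpha\to0^{+}.
\end{equation*}
On the other hand, since $\uob\to\uo$ in $\htr$ as $\beta\to0^{+}$ by~\eqref{eq:NSV-teo2a}(ii), continuous dependence on the data for the Euler equations in $\htr$ yields
\begin{equation*}
  \sup_{0<t<T}\|\ub(t)-u(t)\|_{\htr}\longrightarrow0\qquad\text{as }\beta\to0^{+};
\end{equation*}
here it is essential to compare $\ub$ with $u$ through well-posedness rather than through a direct $\htr$-energy estimate for the difference of two Euler solutions, because $u$ is only known to belong to $\htr$, not to $H_4$.

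Finally I would combine the two limits by a diagonal argument. Given a sequence $\beta_n\to0$, the first limit (taken with $\beta=\beta_n$) provides, for each $n$, some $\alpha_n\in(0,1/n)$ with $\sup_{0<t<T}\|u^{\alpha_n,\beta_n}(t)-u^{\beta_n}(t)\|_{\htr}<1/n$; in particular $\alpha_n\to0$. Then
\begin{equation*}
  \sup_{0<t<T}\|u^{\alpha_n,\beta_n}(t)-u(t)\|_{\htr}\le\frac1n+\sup_{0<t<T}\|u^{\beta_n}(t)-u(t)\|_{\htr}\longrightarrow0
\end{equation*}
by the second limit, which is the assertion. I expect the only delicate points to be: (i) securing the common existence time $[0,T]$ for the auxiliary Euler flows $\ub$ for $\beta$ small (again via continuous dependence of the $\htr$-existence time on the datum); and (ii) the fact that the rate in the first limit is not uniform in $\beta$ — which is harmless, since $\alpha_n$ is chosen after $\beta_n$. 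As an alternative to quoting Theorem~\ref{thm:teo1a}, one may reproduce its proof directly while mollifying $\uob$ instead of $\uo$: the regularized Euler datum $\uobd$ then obeys $\|\uobd\|_{\hc}\le C_\beta/\delta$, with a single power of $1/\delta$ (one power being saved because $\uob\in H_4$), which is exactly what bounds the term $\alpha^{2}(\Delta\D_t\,v_\delta,\omega)_{\htr}$, $v_\delta$ being the regularized Euler solution and $\omega=v_\delta-\uab$, in the $\htr$-energy identity, the nonlinear contributions being controlled by the $s=3$ commutator inequality (Lemma~\ref{lem:lemmaRe} and Kato~\cite{Kat1972}) together with a bootstrap on $\|\uab\|_{\lihtr}$.
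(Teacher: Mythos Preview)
Your argument is correct, and it takes a genuinely different route from the paper's own proof. The paper works directly: it introduces the mollified Euler solution $\ud$ (datum $\uod$), splits $\|\uab-u\|_{\htr}$ through $\ud$ and $\ub$, and performs $\htr$-energy estimates on each piece, supplementing them with an auxiliary $\hsp$-estimate ($\tfrac32<s'<2$) so as to control the $L^\infty$-norm of the difference via Sobolev embedding; the commutator Lemma~\ref{lem:lemmaRe} is used to close the nonlinear terms. In particular, the paper essentially re-derives (via the Bona--Smith/Masmoudi mollification) both the $\alpha\to0$ limit and the $\htr$-continuous dependence of Euler in one long computation tailored to this setting.

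Your approach instead factors the problem into two known black boxes: Theorem~\ref{thm:teo1a} with $m=3$ (applicable because $\uob\in H_4$ gives the extra derivative needed for the sharp $\alpha$-limit in $\htr$) handles $\|\uab-\ub\|_{\htr}\to0$; Hadamard well-posedness of Euler in $\htr$ handles $\|\ub-u\|_{\htr}\to0$; and a diagonal choice of $\alpha_n$ after $\beta_n$ glues the two. This is cleaner and more modular, and it makes transparent exactly why hypothesis~\eqref{eq:NSV-teo2a}(i) is the right one. The paper's approach, on the other hand, is self-contained (it does not invoke the full $\htr$ well-posedness of Euler as an external fact) and yields explicit quantitative bounds in terms of $\alpha,\beta,\delta$, which is what is later reused in Section~\ref{sec:hadamard}. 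Your caveats are accurate: the common existence time for $\ub$ on $[0,T]$ follows from lower semicontinuity of the Euler lifespan in $\htr$ together with persistence of $H_4$-regularity, and the non-uniformity of the first limit in $\beta$ is indeed harmless because $\alpha_n$ is chosen after $\beta_n$.
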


\medskip

To prove the Theorem~\ref{thm:theorem2} we need the following
preliminary lemma showing that we can construct the solution
$u^{\alpha_n,\beta_n}$ in a time interval independent of $n\in \N$, and
this would be enough in order to get weak convergence results by using
the classical compactness methods, even if we are interested in strong
convergence.
\begin{lemma}
  \label{lem:lemma1}
  Under the hypotheses of Theorem $\ref{thm:theorem2}$, it follows
  that, for any positive sequence $\{\beta_n\}$ such that $\beta_n \to
  0$, as $n \to \infty$, we can find a positive sequence
  $\{\alpha_n\}$ with $\alpha_n \to 0$ such that $\|u^{\alpha_n,
    \beta_n}\|_{L^\infty(0,T;\htr)}$ results bounded uniformly in
  $n\in \N$.
\end{lemma}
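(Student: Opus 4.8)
The plan is to avoid any direct a priori estimate on $u^{\alpha_n,\beta_n}$. A naive $\htr$-energy estimate — taking the $\htr$ inner product of the Euler--Voigt equations with the solution and using a Kato-type commutator bound as in Lemma~\ref{lem:lemmaRe} — produces only a Riccati inequality $\frac{d}{dt}\|u^{\alpha_n,\beta_n}\|_{\htr}^2\le C\|u^{\alpha_n,\beta_n}\|_{\htr}^3$, whose solutions may blow up before $t=T$ when the data have uncontrolled size. Instead I would transfer the bound from the \emph{pure} Euler flow, for which stability in $\htr$ is available, by means of Theorem~\ref{thm:teo1a}. Concretely, for each $n$ let $u^{\beta_n}$ be the solution of the Euler equations~\eqref{eq:Ea}-\eqref{eq:Eb} with datum $u_0^{\beta_n}$, and split
\begin{equation*}
  \|u^{\alpha_n,\beta_n}\|_{L^\infty(0,T;\htr)}\le \|u^{\alpha_n,\beta_n}-u^{\beta_n}\|_{L^\infty(0,T;\htr)}+\|u^{\beta_n}\|_{L^\infty(0,T;\htr)},
\end{equation*}
controlling the two terms separately.

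First I would bound the second term uniformly in $n$. By the hypothesis~\eqref{eq:NSV-teo2a}$\,ii)$ we have $u_0^{\beta_n}\to\uo$ in $\htr$, so $\sup_n\|u_0^{\beta_n}\|_{\htr}<\infty$; since $3>5/2$ the Euler equations are locally well posed in $\htr$ with existence time depending only on the $\htr$-norm of the datum (see~\cite{MB2002}), and since $u$ itself exists on the \emph{closed} interval $[0,T]$ it extends slightly beyond $T$. The usual continuation and continuous-dependence argument then shows that for all $n$ large enough $u^{\beta_n}$ is defined on $[0,T]$ with $\|u^{\beta_n}\|_{L^\infty(0,T;\htr)}\le M$ for a constant $M$ independent of $n$ (the finitely many remaining indices being irrelevant for a uniform bound). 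Moreover, since $u_0^{\beta_n}\in\hq$ and $\int_0^T\|\nabla u^{\beta_n}(t)\|_{L^\infty}\,dt<\infty$ (because $\htr\hookrightarrow W^{1,\infty}$), persistence of regularity for the Euler equations gives $u^{\beta_n}\in C([0,T];\hq)$, and a comparison argument as in the proof of the Larios--Titi theorem above yields $u^{\beta_n}\in C^1([0,T];\htr)$ as well.

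For the first term I would fix $n$ (large) and apply Theorem~\ref{thm:teo1a} with $m=3$ to the pair $\bigl(u^{\beta_n},\,u^{\alpha,\beta_n}\bigr)$: $u^{\beta_n}$ solves the Euler equations with datum in $\hq$, $u^{\alpha,\beta_n}$ solves the Euler--Voigt equations with the \emph{same} datum and is globally defined by~\cite{LT2010}, and both belong to $C([0,T];\hq)\cap C^1([0,T];\htr)$ by the previous paragraph; hence $\sup_{0<t<T}\|u^{\alpha,\beta_n}(t)-u^{\beta_n}(t)\|_{\htr}\to0$ as $\alpha\to0$. I would then choose, for each large $n$, a number $\alpha_n\in(0,1/n)$ small enough (how small may depend on $n$, which is harmless) that $\|u^{\alpha_n,\beta_n}-u^{\beta_n}\|_{L^\infty(0,T;\htr)}<1$; this forces $\alpha_n\to0$ and, together with the previous bound, gives $\|u^{\alpha_n,\beta_n}\|_{L^\infty(0,T;\htr)}<M+1$ for all large $n$, i.e.\ the required uniform bound. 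The main obstacle is the second paragraph: one must know that small $\htr$-perturbations of $\uo$ yield Euler solutions surviving on the \emph{entire} interval $[0,T]$ with uniformly bounded $\htr$-norm — this is exactly where the stability theory of the three-dimensional Euler equations in $H^s$, $s>5/2$, enters — whereas the additional $\hq$-regularity of $u_0^{\beta_n}$ need only be propagated with an $n$-dependent (but finite) bound, which is all Theorem~\ref{thm:teo1a} requires for each fixed $n$. An alternative to the last step, bypassing Theorem~\ref{thm:teo1a}, would be to rerun inline the comparison argument from the proof of Theorem~\ref{thm:teo1} between $u^{\alpha,\beta_n}$ and a suitably mollified regularized-Euler solution $u^{\beta_n}_\delta$, with $\delta=\delta_n$ chosen so that $\alpha_n/\delta_n\to0$.
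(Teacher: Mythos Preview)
Your argument is correct, but it takes a genuinely different route from the paper's. The paper does \emph{exactly} the Riccati estimate you dismiss in your first sentence: taking the $\htr$-inner product of~\eqref{eq:EVa} with $\uab$ yields
\[
\frac{d}{dt}\bigl(\|\uab\|_{\htr}^2+\alpha^2\|\nabla\uab\|_{\htr}^2\bigr)\le C\bigl(\|\uab\|_{\htr}^2+\alpha^2\|\nabla\uab\|_{\htr}^2\bigr)^{3/2},
\]
and then solves the Riccati inequality explicitly. The point you miss is that the initial value of the combined quantity is $\|\uob\|_{\htr}^2+\alpha^2\|\uob\|_{\hq}^2$, and while $\|\uob\|_{\hq}$ is indeed uncontrolled, one is free to choose $\alpha=\alpha(\beta)$; taking $\alpha_n=O(\|u_0^{\beta_n}\|_{\hq}^{-1})$ keeps the initial value close to $\|\uo\|_{\htr}^2$, so the Riccati blow-up time stays bounded away from zero uniformly in $n$. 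This is a two-line argument and does not invoke Theorem~\ref{thm:teo1a} or any stability theory for Euler.

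Your approach, by contrast, transfers the bound from the Euler flow $u^{\beta_n}$ via Theorem~\ref{thm:teo1a}. This is heavier (it needs $H^s$ continuous dependence for Euler, persistence of $\hq$-regularity, and the full machinery of Theorem~\ref{thm:teo1a} for each fixed $n$), but it buys something the paper's argument does not: it works on the \emph{entire} interval $[0,T]$ where the limiting Euler solution exists, whereas the Riccati bound only covers an interval of length $\sim 1/(C\|\uo\|_{\htr})$, which may be strictly shorter than the $T$ postulated in Theorem~\ref{thm:theorem2}. In that sense your proof is more robust, at the price of being less self-contained. Note also that the $H_3$ continuous-dependence step you appeal to in the second paragraph is essentially what the paper establishes later (estimate~\eqref{eq:NSV82} for $\|\ub-u\|_{\lihtr}$), so you are front-loading a nontrivial ingredient.
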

\begin{proof}
  Consider the Euler-Voigt equations~\eqref{eq:EVa}-\eqref{eq:EVb}
  with initial data $\uob$.  Taking the $H_3$-inner product with
  $\uab$, and with the usual inequalities for the convective term we
  obtain
  \begin{equation*}
    \frac{d}{dt}( \|\uab \|_{\htr}^2 + \alpha^2\|\nabla \uab
    \|_{\htr}^2) 
    \leq C\big( \|\uab \|_{\htr}^2 + \alpha^2 \|\nabla \uab
    \|_{\htr}^2 \big)^{\frac{3}{2}}.
  \end{equation*}
  Thus, we deduce that
  \begin{equation*}
    \|\uab(t) \|_{\htr}^2  +   \alpha^2\|\nabla \uab(t) \|_{\htr}^2 \leq    
    \frac{\|\uob \|_{\htr}^2 +  \alpha^2 \|\nabla \uob \|_{\htr}^2}{ 
      \Big[ 1 - Ct\big( \|\uob \|_{\htr}^2 +
      \alpha^2 \|\nabla \uob \|_{\htr}^2 \big)^{1/2}\Big]^2},
  \end{equation*}
  with $t\in [0, T]$.  Thanks to the properties $i)$ and $ii)$
  in~\eqref{eq:NSV-teo2a}, it follows that, letting $\beta \to 0$,
  then $\|\uob\|_{\htr} \to \|\uo\|_{\htr}$. Next, we can choose
  $\alpha \to 0$ such that $\alpha\|\nabla \uob\|_{\htr}$ remains
  bounded (for instance choose $\alpha=O(\|u_0^\beta\|_{H_4}^{-1})$. )
  in this way $\|\uab \|_{L^\infty(0,T;H_3)}$ results to be uniformly
  bounded as well, in a time interval $[0,T]$ independent of $\beta$.
\end{proof}

\bigskip
We can now give the proof of Theorem~\ref{thm:theorem2}.
\begin{proof}[Proof of Theorem~\ref{thm:theorem2}]
  To estimate $\|\uab - u \|_{\lihtr}$, we write
  \begin{equation*}
    \begin{aligned}
      \|\uab-u\|_{\htr} &\leq
      \|\uab-\ub\|_{\htr}+\|\ub-u\|_{\htr}=:I+II,
      \\
      &\leq \| \uab - \ud\|_{\htr} + \|\ud- \ub\|_{\htr}
      +\|\ub-u\|_{\htr}=: I_1 + I_2+II,
    \end{aligned}
  \end{equation*}
  where $\ub$ is the solution of the Euler equations with initial data
  $\uob$ and $\ud$ is the solution of the Euler equations starting
  from the regularized initial datum $\uod$. (Note that,
  Lemma~\ref{lem:lemma1} applies also to $\ub$, and consequently
  $\|\ub\|_{\htr}$ results to be uniformly bounded with respect to
  $\beta$).

  \bigskip

  \noindent\textit{Estimate for }$\|\uab-\ud \|_{\lihtr}$: By setting
  $\omega:=\ud-\uab$, we get 
  \begin{equation*}
    \D_t \omega +\alpha^2\D_t \Delta\omega+\nabla (p_\delta-p^{\alpha, \beta})
    = - \alpha^2\D_t \Delta \ud-(\omega\cdot\nabla)\,\ud
    -(\uab\cdot \nabla)\, \omega.   
  \end{equation*}
  Taking the $\htr$-inner product with $\omega $, and recalling that
  $u^\delta$ is solution to the (regularized) Euler equations, we
  arrive at
  \begin{equation*}
    \begin{aligned}
     \frac{1}{2} \frac{d}{dt}( \|\omega\|_{\htr}^2 + \alpha^2 \| \nabla
      \omega\|_{\htr}^2) &\leq |( ( \omega\cdot\nabla)\,\ud, \omega
      )_{\htr}| + |( ( \uab\cdot\nabla)\,\omega) ,\omega )_{\htr}|
      \\
      & \quad + \alpha^2 | ( \nabla(\ud \cdot \nabla )\,\ud, \nabla
      \omega)_{\htr} |.
    \end{aligned}
  \end{equation*} 
  From~\cite[Eq.~(2.2)]{Kat1972}, we get
  \begin{equation*}
    |((\uab\cdot\nabla)\,\omega, \omega )_{\htr}| \leq
    \|\uab\|_{\htr} \| \omega \|_{\htr}^2. 
  \end{equation*}
  With direct computations we obtain
  \begin{equation*}
    \begin{aligned}
      \alpha^2 | ( \nabla(\ud \cdot \nabla )\,\ud, \nabla
      \omega)_{\htr} | & \leq C\alpha^2 ( \| \ud\|_{\htr} \|\ud
      \|_{\hq} +\|\ud\|_{\hd} \|\ud \|_{\hc} ) \|\nabla \omega
      \|_{\htr}
      \\
      & \leq C \alpha^2 ({\delta}^{-1} +\delta^{-2})\|\nabla \omega
      \|_{\htr}= \alpha^2 \check C(\delta ) \|\nabla \omega \|_{\htr},
    \end{aligned}
  \end{equation*}
  and also
  \begin{equation*}
    |( ( \omega\cdot\nabla) \ud, \omega )_{\htr}|
    \leq \|\ud\|_{\htr} \| \omega \|_{\htr}^2 + \|\ud\|_{\hq} \|
    \omega \|_{L^\infty}\| \omega \|_{\htr}.
  \end{equation*}
  Now, to estimate the second term on the right-hand in $\hsp$, with
  $\frac{3}{2} < s^{\prime} < 2$, we employ usual techniques and
  Lemma~\ref{lem:lemmaRe} to get
  \begin{equation}
    \label{eq:NSV71}
    \begin{aligned}
     \frac{1}{2} \frac{d}{dt}( \|\omega \|_{\hsp}^2 + \alpha^2\|\nabla \omega
      \|_{\hsp}^2) \leq & (\|\ud \|_{\hspru} +
      \|\uab\|_{\hspru}) \| \omega \|^2_{\hsp}
      \\
      & + \alpha^2 \big|( \nabla\big[(\ud\cdot \nabla )\,\ud\big],
      \nabla \omega)_{\hsp} \big|,
    \end{aligned}
  \end{equation}
  and again
  \begin{equation*}
    \begin{aligned}
      \alpha^2 \big|( \nabla\big[(\ud \cdot \nabla )\,\ud\big], \nabla
      \omega)_{\hsp} \big| & \leq C\alpha^2( \|\ud\|_{\hspru}
      \| \ud\|_{\htr} + \|\ud\|_{\hsprd} \|
      \ud\|_{\hd})\|\nabla \omega \|_{\hsp}
      \\
      & \leq C\alpha^2 \big( \|\ud\|_{\hspru} +
      \delta^{1-s^\prime }\big) \|\nabla \omega \|_{\hsp}
      \\
      & \leq C\alpha^2 \big( 1 + {\delta^{1-s^\prime}}\big) =
      \alpha^2\widetilde C(\delta ) \|\nabla \omega \|_{\hsp}.
    \end{aligned}
  \end{equation*}
  Then, it follows
  \begin{align*}
    \frac{d}{dt}( \|\omega \|_{\hsp}^2 + \alpha^2\|\nabla \omega
    \|_{\hsp}^2)
    \leq C (\|\omega\|_{\hsp}^2 + \alpha^2 \|\nabla \omega\|_{\hsp}^2
    ) + \alpha^2 \widetilde C^2(\delta).
  \end{align*}
  Thus, from the above differential inequality we get an estimate for
  $\|\omega \|_{\hsp}^2$, and taking the $L^{\infty}$-norm on $[0,T]$
  we find
  \begin{equation*}
    \|\omega \|_{\lihsp}^2 \leq \big(
    \|\omo \|_{\hsp}^2 + \alpha^2 \|\nabla \omo \|_{\hsp}^2  +
    \alpha^2 \widetilde C^2(\delta) T \big) C(T) .
  \end{equation*}
  Summarizing the previous estimates, we
  have 
  \begin{equation*}
    \begin{aligned}
      \frac{d}{dt}( \|\omega\|_{\htr}^2 + \alpha^2 \| \nabla
      \omega\|_{\htr}^2) \leq C( \|\ud\|_{\htr} + \|\uab\|_{\htr})\|
      \omega \|_{\htr}^2 + \alpha^2 \check C(\delta ) \|\nabla
      \omega\|_{\htr}
      \\
      + \frac{C(T)^{\frac{1}{2}}}{\delta} \big( \|\omo \|_{\hsp}^2 +
      \alpha^2 (\|\omo \|_{\hspru}^2 + \widetilde C^2(\delta)\, T
      )\big)^{\frac{1}{2}} \| \omega \|_{\htr}.
    \end{aligned}
  \end{equation*}
  To handle the term $\|\omo\|_{\hsp}$ from the right-hand side we
  write
  \begin{equation}
    \label{NSV65yy}
    \|\omo\|_{\hsp} = \|\uod - \uob\|_{\hsp}
    \leq  \|\uod-\uobd\|_{\hsp} 
    + \|  \uobd - \uob \|_{\hsp}.
  \end{equation}
  where $\uobd$ is the regularization of the initial datum $\uob$.
  Expanding $\|\uobd-\uod\|_{\hsp}^2$ and $\| \uobd - \uob \|_{\hsp}$
  in terms of their Fourier coefficients, we have that
  \begin{equation*}
    \begin{aligned}
      {a)}&\ \|\uobd - \uod \|_{\hsp}^2 \leq
      C\sum_{1\leq |k|\leq \frac{1}{\delta}} |k|^{2s'} | \uobk - \uok
      |^2
      \leq C \delta^{2(3 -s')} \| \uob - \uo \|_{\htr}^2,
      \\
      {b)}& \ \|\uobd - \uob \|_{\hsp}^2 \leq
      C\sum_{|k| > \frac{1}{\delta}} |k|^{2s'} | \uobk |^2 \leq
      C\delta^{2(3 -s')} \| \uob\|_{\htr}^2.
    \end{aligned}
  \end{equation*}
  Therefore, using~\eqref{NSV65yy} and the above inequalities, we
  obtain
  \begin{equation}
    \label{NSV-y} 
    \|\omo\|_{\hsp}^2 \leq C\delta^{6 - 2s'}(\| \uob - \uo \|_{\htr}^2 + \| \uob\|_{\htr}^2).
  \end{equation}
  Analogously, we get $\|\omo\|_{\hspru}^2 \leq C\delta^{4 -
    2s'}(\| \uob - \uo \|_{\htr}^2 + \| \uob\|_{\htr}^2). $ Then, the
  differential inequality for the $\htr$-norm becomes
  \begin{equation*}
    \begin{aligned}
      \frac{d}{dt}( \|\omega\|_{\htr}^2 + \alpha^2 \| \nabla
      \omega\|_{\htr}^2) \leq & C(\|\ud\|_{\htr} + \|\uab\|_{\htr})\|
      \omega \|_{\htr}^2
      \\
      & + \alpha^2 \check C(\delta) \|\nabla \omega\|_{\htr} +
      \widehat C (T, \alpha, \beta, \delta) \| \omega \|_{\htr},
    \end{aligned}
  \end{equation*} 
  where
  \begin{equation*}
    \begin{aligned}
      \widehat C (T, \alpha, \beta, \delta) := \Big(\delta^{2 - 2s'}
      (\delta^2 + \alpha^2 ) \big(\| \uob - \uo \|_{\htr}^2 +
      \|\uob\|_{\htr}^2\big) + \frac{\alpha^2}{\delta^2} \widetilde
      C^2(\delta ) T )\Big)^{\frac{1}{2}}C^{\frac{1}{2}}(T).
    \end{aligned}
  \end{equation*}
  After some manipulations we get
  \begin{equation*}
    \begin{aligned}
      \frac{d}{dt}( \|\omega\|_{\htr}^2 + \alpha^2 \| \nabla
      \omega\|_{\htr}^2) \leq & C(\|\ud\|_{\htr} +
      \|\uab\|_{\htr} + 1) ( \| \omega \|_{\htr}^2 +
      \alpha^2\|\nabla \omega \|_{\htr}^2)
      \\
      & + \alpha^2 \check C^2(\delta ) + \widehat C^2(T, \alpha,
      \beta, \delta ) ,
    \end{aligned}
  \end{equation*}
  and by the Gronwall's inequality it follows that
  \begin{equation}
    \label{eq:NSV75}
    \begin{aligned}
      \|\ud - \uab\|^2_{\lihtr} \leq & \Big((\alpha^2 \check
      C^2(\delta ) + \widehat C^2(T, \alpha, \beta, \delta ) )T
      \\
      & + \|\omo\|_{\htr}^2 + \alpha^2 \|\nabla
      \omo\|_{\htr}^2\Big)e^{ C(T, \alpha, \beta, \delta)},
      %
    \end{aligned}
  \end{equation}
  where $C(T, \alpha, \beta, \delta) := \int_0^T C(\|\ud (s)\|_{\htr}
  + \|\uab (s)\|_{\htr} + 1)\,ds$.  Now, we have that
  \begin{equation*}
    \begin{aligned}
      \|\nabla \omo\|_{\htr}^2 \leq C\|\nabla ( \uobd -
      \uod)\|_{\htr}^2 + C\|\nabla ( \uobd - \uob )\|_{\htr}^2.
    \end{aligned}
  \end{equation*}
  Then, expanding $\|\nabla ( \uobd- \uod )\|_{\htr}^2$ in terms of
  its Fourier coefficients, we find 
  \begin{equation*}
    \begin{aligned}
      \|\nabla (\uobd - \uod )\|_{\htr}^2 &\leq C\sum_{1\leq |k|\leq
        \frac{1}{\delta}} |k|^8 |
      \uobk - \uok  |^2 \\
      &\leq \frac{C}{\delta^2}\sum_{1\leq |k|\leq \frac{1}{\delta}}
      |k|^6 |\uobk - \uok |^2 \leq \frac{C}{\delta^2} \| \uob - \uo
      \|_{\htr}^2,
    \end{aligned}
  \end{equation*}
  and it follows that
  \begin{equation}
    \label{eq:NSV75aaa}
    \begin{aligned}
      \alpha^2 \|\nabla \omo\|_{\htr}^2 &\leq
      C\frac{\alpha^2}{\delta^2} \| \uob - \uo \|_{\htr}^2 +
      C\alpha^2\| \uobd - \uob\|_{\hq}^2
      \\
      &\leq C\frac{\alpha^2}{\delta^2} \| \uob - \uo \|_{\htr}^2 +
      C\alpha^2\| \uob\|_{\hq}^2.
    \end{aligned}
  \end{equation}
  Hence, by using $ \|\omo\|_{\htr}^2 \leq C\|\uod - \uo\|_{\htr}^2 +
  C\|\uob - \uo\|_{\htr}^2$ and~\eqref{eq:NSV75aaa}, the
  estimate~\eqref{eq:NSV75} becomes 
{
    \begin{equation}
      \label{eq:NSV75a0}
      \begin{aligned}
        \|\ud  -  \uab &\|^2_{\lihtr} \\
        &\leq \Big(\big(\alpha^2 \check C^2(\delta ) + \widehat C^2(T,
        \alpha, \beta, \delta ) \big)T + C\|\uod - \uo\|_{\htr}^2
        \\
        &\hspace{0.4 cm}+ C(1 + \frac{\alpha^2}{\delta^2})\|\uob -
        \uo\|_{\htr}^2 + C\alpha^2\|\uob \|_{\hq}^2\Big)e^{C(T,
          \alpha, \beta, \delta)}.
      \end{aligned}
    \end{equation}
  } \bigskip

  \noindent\textit{Estimate for }$\| \ud - \ub \|_{\lihtr}$:  Writing
  the $\htr$-energy estimate for $\ombd:= \ud - \ub$ and using, for the 
  sake of brevity, $\omega$ instead of $\ombd$ we have that
  \begin{equation*}
    \frac{1}{2}\frac{d}{dt} \|\omega\|_{\htr}^2 
    \leq   |((\omega\cdot\nabla)\,\ud,\omega)_{\htr}|
    +|((\ub\cdot\nabla)\,\omega,\,\omega)_{\htr}|,    
  \end{equation*}
  and by the usual estimates
  \begin{equation}
    \label{eq:NSV76}
    \begin{aligned}
      \frac{d}{dt} \|\omega \|_{\htr}^2 \leq C(\| \ud \|_{\htr} + \|
      \ub\|_{\htr}) \| \omega \|^2_{\htr} + C\| \ud \|_{\hq} \|\omega
      \|_{L^{\infty}} \|\omega \|_{\htr}.
    \end{aligned}
  \end{equation}
  Next, to estimate the second term on the right-hand side
  of~\eqref{eq:NSV76} we use again an $\hsp$-energy inequality, with
  $\frac{3}{2} < s^{\prime} < 2$. Thus, arguing as in the derivation
  of~\eqref{eq:NSV71} we get
  \begin{equation*}
    \frac{d}{dt} \|\omega \|_{\hsp}^2\leq  C(\|\ud\|_{\hspru} + \|\ub\|_{\hspru}) 
    \| \omega\|^2_{\hsp}, 
  \end{equation*}
  and using relation~\eqref{NSV-y}, we infer that
  \begin{equation*}
    \|\omega \|_{\lihsp} \leq \|\omo \|_{\hsp} C(T)  \leq 
    \delta^{3 -s^\prime }(\|\uob\|_{\htr}^2 + \|\uob - \uo\|_{\htr}^2 )^{1/2}C(T).
  \end{equation*}
  Thus, relation ~\eqref{eq:NSV76} becomes
  \begin{equation*}
    \begin{aligned}
      \frac{d}{dt} \|\omega \|_{\htr} \leq C(\|u^{ \delta} \|_{\htr} +
      \|\ub\|_{\htr} +1) \| \omega \|_{\htr} +
      \overset{\curlywedge}{C}(T,\beta) \delta^{2 -s^\prime },
    \end{aligned}
  \end{equation*}
  where
  $\overset{\curlywedge}{C}(T, \beta) := C(T)(\|\uob\|_{\htr} +
  \|\uob - \uo\|_{\htr} )^{1/2}$. Therefore, applying the Gronwall
  lemma, we deduce that
  \begin{equation}
    \label{eq:NSV78}
    \begin{aligned}
           \| \ud - \ub \|_{L^\infty(0,T;\htr)}\leq &
      \big(\|\uod-\uo\|_{\htr}+\| \uob-\uo\|_{\htr}
       \\ 
       &\hspace{2cm} + \delta^{2 -s^\prime }\overset{\curlywedge}{C}(T, \beta)T \big)
      e^{C(T, \beta, \delta)},
    \end{aligned}
  \end{equation}
{ with $C(T, \beta, \delta) := \int_0^T C(\|\ud (s)\|_{\htr}
  + \|\ub (s)\|_{\htr} + 1)\,ds$.}

  \bigskip
  
  \noindent \noindent\textit{Estimate for }$\| \uab - \ub\|_{\lihtr}$:

{Using ~\eqref{eq:NSV75a0} and ~\eqref{eq:NSV78} we
    have that
    \begin{equation}
      \label{eq:NSV79}
      \begin{aligned}
        \|\uab -  \ub \|^2_{\lihtr} 
        &\leq C \big( \|\uab - \ud \|^2_{\lihtr} + \|\ud - \ub
        \|^2_{\lihtr}\big)
        \\
        &\leq C\Big[ \big(\alpha^2 \check C^2 (\delta) + \widehat C^2(T,
        \alpha, \beta, \delta ) \big)T + C\|\uod - \uo\|_{\htr}^2
        \\
        & \hspace{1.4 cm} + C(1+\frac{\alpha^2}{\delta^2})\|\uob -
        \uo\|_{\htr}^2 + \alpha^2\| \uob \|_{\hq}^2
        \\
        & \hspace{1.4 cm} + \delta^{4 - 2 s'}
        \overset{\curlywedge}{C}{}^2(T, \beta) T^2 \Big] e^{2C(T,
          \beta, \delta) + C(T, \alpha, \beta, \delta)}.
      \end{aligned}
    \end{equation}
  }
 
  \bigskip

  \noindent\textit{Estimate for }$\| \ub - u \|_{\lihtr}$: We split
  $II$ as follows
%
  \begin{equation}
    \label{NSV80} 
    II\leq \|\ub-\ud\|_{\htr} + \|\ud-u\|_{\htr}.
  \end{equation}
  Consider the first term on the right hand-side of~\eqref{NSV80}.  It
  follows that the difference $\| \ud - \ub \|_{\lihtr}$ can be
  estimated as done in~\eqref{eq:NSV78}. For the second term on the
  right hand-side of~\eqref{NSV80}, we can use the
  same~\cite[Eq.~(23)]{Mas2007}.  Hence, we actually get
  \begin{equation*}
    \|\ud - u \|_{\lihtr} \leq    (\|\uod - \uo\|_{\htr} + \delta^{2 -s'}T) C(T).
  \end{equation*}
  with $\frac{3}{2}< s' < 2$.  Now, multiplying the right-hand side of
  the latter inequality by $ e^{C (T, \beta, \delta)}$,
  and then adding to~\eqref{eq:NSV78}, we get the estimate
  \begin{equation}
    \label{eq:NSV82}
    \begin{aligned}
      \| \ub - u \|_{\lihtr} \leq & C\Big(\| \uod - \uo\|_{\htr}
      \big(C(T) + C) +
      \| \uob - \uo\|_{\htr}\\
      & + \delta^{2 -s^\prime } \big(C(T) +
      \overset{\curlywedge}{C}(T, \beta) \big)T\Big) e^{C (T,\beta,
        \delta)}.
    \end{aligned}
  \end{equation}


  \bigskip

  Note that, the terms $C(T), \overset{\curlywedge}{C}(T, \beta)$, and
  $C(T,\beta, \delta)$ are bounded in terms of $\delta$ and
  $\beta$. Then, for any positive sequence $\{\beta_n\}$, such that
  $\beta_n \to 0$ as $n \to \infty$, letting $\delta \to 0$, we obtain
  \begin{equation*}
    \| u^{\beta_{n}} - u\|_{\lihtr} \to 0\quad\text{ as } n \to \infty.
  \end{equation*}
  Furthermore, by relation~\eqref{eq:NSV79}, letting $\alpha_n \to 0$,
  and taking a sequence $\delta_n \to 0$, such that $ \|u_{0,
    \delta_n}^{\beta_n} -\uo\|_{\htr}$, $\alpha_n\|
  u_0^{\beta_n}\|_{\hq}$, $\frac{\alpha_n}{\delta_n^2}$,
  $\frac{\alpha_n}{\delta_n}$, and $\frac{\alpha_n}{\delta_n^{s' -1}}$
  go to zero as $n$ goes to infinity, we find
  \begin{equation*}
    \|u^{\alpha_n, \beta_n} - u^{\beta_n} \|_{\lihtr} \to 0
    \quad\text{as } n \to \infty.
  \end{equation*}
  Hence, we have that
  \begin{equation*}
    \|u^{\alpha_n, \beta_n} - u \|_{\lihtr} \to 0\quad \text{as } n \to \infty,
  \end{equation*}
  and the thesis follows.
\end{proof}
\section{Convergence of the solutions of the Navier-Stokes-Voigt
  equations}
\label{sec:hadamard}
Combining the results of the previous section with similar
computations, we study also the behavior of solutions in terms of the
viscosity. We are still set in the space-periodic case and, for simplicity
we assume $f=0$. Next, we state a convergence result for solutions of
the Navier-Stokes-Voigt equations to the corresponding solutions of
the Euler equations.
%
{
  \begin{theorem}
    \label{thm:teo3}
    Consider the Euler equations~\eqref{eq:Ea}-\eqref{eq:Eb} with
    initial condition $\uo \in H_3$, and let $T>0$ be a finite time of
    existence for the solution $u \in C([0, T];H_3)$.  Let $\uabn$ be
    a solution to the Navier-Stokes-Voigt
    equations~\eqref{eq:NSV2a}-\eqref{eq:NSV2c}, with initial datum
    $\uob$, satisfying the properties $i)$ and $ii)$
    in~\eqref{eq:NSV-teo2a}.  Then, for any choice of positive sequences
    $\{\beta_n\}$ and $\{\nu_n\}$, both converging to zero as $n
    \to \infty$, there
    exists a positive sequence $\{\alpha_n\}$ converging to zero as $n
    \to \infty$, such that
    \begin{equation*}
      \sup_{0<t<T}\|u^{\alpha_n, \beta_n, \nu_n}-u\|_{\htr} \to
      0,\quad\text{as } n \to \infty. 
    \end{equation*}
  \end{theorem}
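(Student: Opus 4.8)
The plan is to route the limit ``Navier--Stokes--Voigt $\to$ Euler'' through the Euler--Voigt system, so that Theorem~\ref{thm:theorem2} does the bulk of the work and only the dissipative correction has to be estimated afresh. Letting $\uab$ be the solution of the Euler--Voigt equations~\eqref{eq:EVa}-\eqref{eq:EVb} with the \emph{same} initial datum $\uob$ as $\uabn$, one writes
\begin{equation*}
  \|\uabn - u\|_{\htr}\ \leq\ \|\uabn - \uab\|_{\htr}\ +\ \|\uab - u\|_{\htr},
\end{equation*}
and, since $\uob$ satisfies the hypotheses~$i)$ and~$ii)$ of~\eqref{eq:NSV-teo2a}, Theorem~\ref{thm:theorem2} already supplies, for the prescribed $\{\beta_n\}$, a null sequence $\{\alpha_n\}$ (and an auxiliary truncation scale $\{\delta_n\}$) along which $\|u^{\alpha_n,\beta_n}-u\|_{\lihtr}\to0$. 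Moreover, Lemma~\ref{lem:lemma1} transfers verbatim to~\eqref{eq:NSV2a}-\eqref{eq:NSV2c}: in the $\htr$-energy identity for $\uabn$ the viscous term equals $\nu\|\nabla\uabn\|_{\htr}^{2}\geq0$ and may simply be discarded, yielding the same Riccati-type inequality; hence, with the choice $\alpha=O(\|u_0^{\beta}\|_{\hq}^{-1})$ of Lemma~\ref{lem:lemma1}, the family $\{\uabn\}$ is bounded in $\lihtr$ uniformly in $n$ on an interval $[0,T]$ independent of $n$, and the same a priori estimate gives $\alpha^{2}\|\uab(t)\|_{\hq}^{2}\leq C$ and $\alpha^{2}\|\uabn(t)\|_{\hq}^{2}\leq C$ on $[0,T]$ (so $\|\uab(t)\|_{\hq}\leq C/\alpha$), together with the dissipation bound $\nu\int_{0}^{T}\|\nabla\uabn(s)\|_{\htr}^{2}\,ds\leq C$.

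It remains to estimate the viscous defect $w:=\uab-\uabn$, which satisfies $w(0)=0$ and, by subtracting~\eqref{eq:NSV2a} from~\eqref{eq:EVa},
\begin{equation*}
  \D_t w-\alpha^{2}\D_t\Delta w+(w\cdot\nabla)\,\uab+(\uabn\cdot\nabla)\,w+\nabla(p^{\alpha,\beta}-p^{\alpha,\beta,\nu})=-\nu\Delta\uabn .
\end{equation*}
Testing with $w$ in $\htr$, the pressure term drops by incompressibility and periodicity; the two convective terms are controlled exactly as in the proof of Theorem~\ref{thm:theorem2} by the Kato/Lemma~\ref{lem:lemmaRe}-type estimates (the $L^{\infty}$-norm of $w$ being absorbed via an auxiliary $\hsp$-energy inequality with $3/2<s'<2$, which loses no derivative on $\uab,\uabn$ and, since $s'+1<3$, only forces $\|w\|_{L^{\infty}(0,T;\hsp)}\leq C\sqrt{\nu}$), with a total contribution $\leq C(\|\uab\|_{\htr}+\|\uabn\|_{\htr})\|w\|_{\htr}^{2}+C\nu/\alpha^{2}$; and the genuinely new term is rewritten, using $\uabn=\uab-w$, as
\begin{equation*}
  -\nu(\Delta\uabn,w)_{\htr}=\nu(\nabla\uabn,\nabla w)_{\htr}=\nu(\nabla\uab,\nabla w)_{\htr}-\nu\|\nabla w\|_{\htr}^{2},
\end{equation*}
where $-\nu\|\nabla w\|_{\htr}^{2}$ is kept on the left and $\nu(\nabla\uab,\nabla w)_{\htr}\leq\tfrac{\nu}{2}\|\nabla w\|_{\htr}^{2}+\tfrac{\nu}{2}\|\uab\|_{\hq}^{2}\leq(\text{absorbed})+C\nu/\alpha^{2}$ by the bounds of the previous paragraph. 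Since $\|\uab\|_{\htr}+\|\uabn\|_{\htr}$ is uniformly bounded and $w(0)=0$, Gronwall's lemma then gives $\|\uabn-\uab\|_{\lihtr}\leq C(T)\,\sqrt{\nu}/\alpha$.

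Combining the two bounds, $\|\uabn-u\|_{\lihtr}\leq C(T)\sqrt{\nu}/\alpha+\|\uab-u\|_{\lihtr}$, and the only delicate point — the step I expect to be the main obstacle — is the simultaneous choice of the null sequences $\{\alpha_n\}$ and $\{\delta_n\}$: the viscous defect forces $\alpha_n\gg\sqrt{\nu_n}$ from below, whereas the Euler--Voigt error of Theorem~\ref{thm:theorem2} forces, through terms such as $\alpha_n^{2}\|u_0^{\beta_n}\|_{\hq}^{2}$, $\alpha_n/\delta_n^{2}$ and $\delta_n\|u_0^{\beta_n}\|_{\hq}$, that $\alpha_n$ and $\delta_n$ be much smaller than $\|u_0^{\beta_n}\|_{\hq}^{-1}$ from above. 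These two demands are compatible precisely when $\nu_n$ tends to zero fast enough relative to $\|u_0^{\beta_n}\|_{\hq}$, i.e. when $\sqrt{\nu_n}\,\|u_0^{\beta_n}\|_{\hq}\to0$ — which is automatic in the standard case where $\uob$ is obtained by filtering $\uo$ at scale $\beta$, since then $\|u_0^{\beta_n}\|_{\hq}\leq C\beta_n^{-1}\|\uo\|_{\htr}$ and it suffices that $\nu_n=o(\beta_n^{2})$. Under this compatibility one may take, for instance, $\alpha_n=\nu_n^{1/4}\|u_0^{\beta_n}\|_{\hq}^{-1/2}$ and $\delta_n$ a suitable fractional power of $\alpha_n$, so that both error terms vanish and the theorem follows; all the remaining steps are the routine energy/Gronwall computations already carried out in Sections~\ref{sec:preliminary}-\ref{sec:convergence-to-Euler}.
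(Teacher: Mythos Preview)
Your decomposition $\|\uabn-u\|_{\htr}\leq\|\uabn-\uab\|_{\htr}+\|\uab-u\|_{\htr}$ is natural, and the energy estimate for $w=\uab-\uabn$ is correct; the bound $\|\uabn-\uab\|_{\lihtr}\leq C(T)\sqrt{\nu}/\alpha$ is exactly what one obtains from $\nu\|\uab\|_{\hq}^{2}\leq C\nu/\alpha^{2}$. The gap is the one you yourself flag as ``the main obstacle'', and it is fatal for the theorem as stated. The statement asserts that for \emph{any} pair of null sequences $\{\beta_n\}$ and $\{\nu_n\}$ one can find $\{\alpha_n\}$. Your viscous estimate forces $\alpha_n\gg\sqrt{\nu_n}$ from below, while Theorem~\ref{thm:theorem2} (through the term $\alpha_n^{2}\|u_0^{\beta_n}\|_{\hq}^{2}$) forces $\alpha_n\|u_0^{\beta_n}\|_{\hq}\to0$ from above. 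These are simultaneously satisfiable only if $\sqrt{\nu_n}\,\|u_0^{\beta_n}\|_{\hq}\to0$, which is an extra hypothesis linking $\nu_n$ and $\beta_n$ that the theorem does not assume. Your own remedy ($\nu_n=o(\beta_n^{2})$ in the filtered case) is still such a compatibility condition; for, say, $\nu_n=1/\log n$ and $\|u_0^{\beta_n}\|_{\hq}\sim n$ no admissible $\alpha_n$ exists.

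The paper avoids this coupling by a different splitting: it compares $\uabn$ not to $\uab$ but to $\uad$, the Euler--Voigt solution with the \emph{regularized} datum $\uod$ of~\eqref{eq:filter}. The point is that $\|\uad\|_{\hq}\leq C/\delta$ with $\delta$ a free truncation scale, so the viscous forcing contributes $\nu\|\nabla\uad\|_{\htr}^{2}\leq C\nu/\delta^{2}$ rather than $C\nu/\alpha^{2}$. The resulting constraints are $\nu_n/\delta_n^{2}\to0$, $\alpha_n/\delta_n^{2}\to0$, and $\alpha_n\|u_0^{\beta_n}\|_{\hq}\to0$, and these are \emph{always} compatible: given arbitrary $\nu_n,\beta_n\to0$, first pick $\delta_n\to0$ slowly enough that $\nu_n/\delta_n^{2}\to0$, then pick $\alpha_n\to0$ fast enough that both $\alpha_n/\delta_n^{2}\to0$ and $\alpha_n\|u_0^{\beta_n}\|_{\hq}\to0$. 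In short, routing the viscous comparison through the $\delta$-regularized solution decouples the viscosity scale from the $\beta$-dependent $H_4$ norm of the data, and that decoupling is precisely the missing idea in your argument.
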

}
\begin{proof}
  In the following, $\uab$ and $\uabn$ will indicate the solutions of
  the Euler-Voigt and the Navier-Stokes-Voigt equations (with the same
  initial datum $\uob$) respectively.  To estimate $\|\uabn - u
  \|_{\lihtr}$, we take into account the following terms
  \begin{equation}
    \begin{aligned}
      \|\uabn - u\|_{\lihtr} \leq & \| \uabn-\uad\|_{\lihtr}
      \\
      &+ \| \uad - u\|_{\lihtr} =: I + II,
    \end{aligned}
  \end{equation}
  where $\uad$ is the solution of the Euler-Voigt equations with
  respect to the regularized initial datum $\uod$. %
  Then, the term $II$ will be split as follows
  \begin{equation*}
    II\leq \|  \uad - \uab\|_{\lihtr} +  \| \uab- u \|_{\lihtr} =: II_1 + II_2.
  \end{equation*}
  The above splitting is probably not the simplest one, but it is the
  most convenient to employ the results proved in the previous
  section.  \bigskip

  \noindent\textit{Estimate for }$\|\uabn-\uad\|_{\lihtr}$:
  Setting $\omega^{\alpha, \beta, \nu}_\delta:= \uad - \uabn$ (as
  usual for the difference $\omega^{\alpha, \beta, \nu}_\delta$, we
  drop the symbols $\alpha, \beta, \delta$, and $\nu$), we get
  \begin{equation*}
    \D_t \omega-\alpha^2\D_t \Delta \omega -\nu \Delta \omega 
    + \nabla (p_\delta^\alpha - p^{\alpha, \nu})
    = - \nu \Delta \uad     -  (\omega \cdot \nabla)\,  \uad 
    -(\uabn\cdot \nabla)\, \omega. 
  \end{equation*}
  Taking the $\htr$-inner product of the above relation with $\omega
  $, with the same inequalities employed in the previous sections we
  arrive at
  %
  \begin{equation}
    \label{eq:NSV85}
    \begin{aligned}
     \frac{1}{2} \frac{d}{dt}( \|\omega\|_{\htr}^2 +& \alpha^2 \| \nabla
      \omega\|_{\htr}^2 )+\frac{\nu}{2} \|\nabla \omega\|_{\htr}^2
      \leq \frac{\nu}{2} \|\nabla \uad\|_{\htr}^2
      \\
      &+\big( ( \|\uad\|_{\htr} + \|\uabn\|_{\htr}) \| \omega
      \|_{\htr} + \|\uad\|_{\hq} \| \omega \|_{L^\infty} \big)\|
      \omega \|_{\htr}.
    \end{aligned}
  \end{equation} 
  In order to estimate $\| \omega \|_{L^\infty (\T^3)}$, we use again
  the same tool (with the $\hsp$-energy inequality, for $\frac{3}{2} <
  s^{\prime} < 2$.) By Lemma~\ref{lem:lemmaRe}, we get
  \begin{equation*}
    \begin{aligned}
      \frac{1}{2}    \frac{d}{dt}( \|\omega \|_{\hsp}^2 + \alpha^2\|\nabla \omega
      \|_{\hsp}^2) & \leq C(\|\uad \|_{\hspru} +
      \|\uabn\|_{\hspru} ) \| \omega \|^2_{\hsp} +
      \frac{\nu}{2} \| \nabla \uad \|_{\hsp}^2
      \\
      & 
      \leq C(\| \omega \|^2_{\hsp} + \alpha^2\|\nabla
      \omega\|_{\hsp}^2) + C\nu .
    \end{aligned}
  \end{equation*}
  Then, it follows that
  \begin{equation*}
    \begin{aligned}
      \|\omega \|_{\lihsp}^2 &\leq \big( (\|\omega_0\|_{\hsp}^2 +
      \alpha^2 \|\nabla \omega_0\|_{\hsp}^2 ) + C\nu T \big) C(T).
    \end{aligned}
  \end{equation*}
  Now, it holds that for $0\leq k\leq 3$
  \begin{equation}
    \label{NSV86a}
    \begin{aligned}
      \|\omega_0 \|_{\hk}^2 = \|\uod - \uob\|_{\hk}^2\leq &
      C\|\uod-\uobd\|_{\hk}^2 + C\|\uobd - \uob\|_{\hk}^2
      \\
      \leq & C\delta^{6 - 2k}(\|\uob - \uo \|_{\htr}^2 + \| \uob
      \|_{\htr}^2).
    \end{aligned}
  \end{equation}
  In particular, for $k=s'$ and $k=s'+1$ we get
  \begin{equation*}
    \begin{aligned}
      a)&\ \|\uod - \uob \|_{\hsp}^2 \leq C\delta^{6 - 2s'} (\|\uob -
      \uo \|_{\htr}^2 + \| \uob \|_{\htr}^2),
      \\
      b)&\ \|\uod - \uob\|_{\hspru}^2 \leq C\delta^{4 - 2s'} (\|\uob
      - \uo \|_{\htr}^2 + \| \uob \|_{\htr}^2). 
    \end{aligned}
  \end{equation*}
  Consequently
  \begin{equation*}
    \begin{aligned}
      \|\omega \|_{\lihsp}^2 \leq \big( \delta^{4 - 2s'}(\delta^2 +
      C\alpha^2) (\|\uob - \uo \|_{\htr}^2 + \| \uob \|_{\htr}^2) +
      C\nu T \big)C(T).
    \end{aligned}
  \end{equation*}
  We use the above inequality and the bound $ \| \nabla \uad \|_{\htr}
  \leq C \| \ua \|_{\htr}/\delta \leq C/\delta$ for the solutions to
  the Euler-Voigt equations (the proof is similar to that
  of~\eqref{eq:NSV63u}). Inserting in relation~\eqref{eq:NSV85} gives,
  after some manipulations,
  \begin{equation*}
    \begin{aligned}
      \frac{d}{dt}( \|\omega\|_{\htr}^2 + \alpha^2 \| \nabla
      \omega\|_{\htr}^2 ) \leq C ( \|\uad\|_{\htr} + \|\uabn\|_{\htr}
      +1) \| \omega \|_{\htr}^2 + C\frac{\nu}{\delta^2} + \widehat
      C^2(T, \alpha, \delta, \nu)
      \\
      \leq C(\alpha, \beta, \delta, \nu) \big(\| \omega \|_{\htr}^2 +
      \alpha^2 \|\nabla \omega \|_{\htr}^2\big) +
      C\frac{\nu}{\delta^2} + \widehat C^2(T, \alpha, \delta, \nu).
    \end{aligned}
  \end{equation*} 
  Where ${C(\alpha, \beta, \delta, \nu):=
  \|\uad\|_{\htr} + \|\uabn\|_{\htr} +1}$ and
  $\widehat C(T, \alpha, \delta, \nu) := \big( \delta^{2 -
    2s'}(\delta^2 + C\alpha^2) (\|\uob - \uo \|_{\htr}^2 + \| \uob
  \|_{\htr}^2)+ \frac{\nu}{\delta^2} T
  \big)^{\frac{1}{2}}C^{\frac{1}{2}}(T)$. Then, using the Gronwall
  lemma, we deduce that
  \begin{equation*}
    \|\omega \|_{L^\infty(0,T;\htr)}^2 \leq  \Big( \big(\|\omega_0\|_{\htr}^2
    +  \alpha^2 \|\nabla \omega_0\|_{\htr}^2 \big) +  \big(
    C\frac{\nu}{\delta^2} +  \widehat C^2(T, \alpha, \delta,
    \nu) \big)T \Big)e^{C(T, \alpha, \beta, \delta, \nu)},
  \end{equation*}
  { with $C(T, \alpha, \beta, \delta, \nu) := C\int_0^T ( \|\uad
    (s)\|_{\htr} + \|\uabn (s)\|_{\htr} + 1)ds$.}

  \noindent To conclude, we need a further estimate of $\alpha^2 \|\nabla
  \omega_0\|_{\htr}^2 =\alpha^2 \|\nabla (\uod - \uob)\|_{\htr}^2$.
  Consider the regularized initial data $\uobd$. It follows that
  \begin{equation}
    \label{eq:NSV85xy}
    \begin{aligned}
      \alpha^2 \|\nabla (\uod - \uob)\|_{\htr}^2 &\leq C \alpha^2
      \|\uod - \uobd\|_{\hq}^2 + C\alpha^2 \|\uobd - \uob\|_{\hq}^2
      \\
      &\leq C \frac{\alpha^2}{\delta^2} \|\uob - \uo\|_{\htr}^2 +
      C\alpha^2 \|\uob\|_{\hq}^2,
    \end{aligned}
  \end{equation}
  and noting that $ \|\uod - \uob\|_{\htr}^2 \leq
  C\|\uod-\uo\|_{\htr}^2 + C\|\uob - \uo\|_{\htr}^2$, we obtain
  \begin{equation}
    \label{eq:NSV85yy}
    \begin{aligned}
      \|\omega \|_{\lihtr}^2 \leq & C\Big( \|\uod - \uo \|_{\htr}^2 +
      C(1 +\frac{\alpha^2}{\delta^2}) \|\uob - \uo\|_{\htr}^2
      \\
      & + C\alpha^2 \|\uob\|_{\hq}^2 + \big( C\frac{\nu}{\delta^2} +
      \widehat C^2(T, \alpha, \delta, \nu) \big)T \Big)e^{C(T,\alpha,
        \beta, \delta, \nu)} .
    \end{aligned}
  \end{equation}

  \bigskip

  \noindent\textit{Estimate for }$\| \uad - \uab\|_{\lihtr}$: Taking
  the $\htr$-energy estimate for $\omega^{\alpha,
    \beta}_\delta:=\uad-\uab$ (also in this case, we drop $\alpha,
  \beta$ and $\delta$), we arrive at
  \begin{equation*}
    \begin{aligned}
      \frac{d}{dt}( \|\omega\|_{\htr}^2 + \alpha^2 \| \nabla
      \omega\|_{\htr}^2) \leq ( \|\uad\|_{\htr} + \|\uab\|_{\htr}) \|
      \omega \|_{\htr}^2 + \|\uad\|_{\hq} \| \omega \|_{L^\infty} \|
      \omega \|_{\htr}.
    \end{aligned}
  \end{equation*}
%
  We estimate the term $\| \omega \|_{L^\infty (\T^3)}$ in the usual
  way and we find
  \begin{equation*}
    \begin{aligned}
      \frac{d}{dt}( \|\omega \|_{\hsp}^2 + \alpha^2\|\nabla \omega
      \|_{\hsp}^2) & \leq (\|\uad \|_{\hspru} +
      \|\uab\|_{\hspru}) \| \omega \|^2_{\hsp}
      \\
      & \leq (\|\uad \|_{\hspru} + \|\uab\|_{\hspru})
      ( \|\omega \|_{\hsp}^2 + \alpha^2\|\nabla \omega
      \|_{\hsp}^2)
      \\
      & \leq C( \|\omega \|_{\hsp}^2 + \alpha^2\|\nabla \omega
      \|_{\hsp}^2).
    \end{aligned}
  \end{equation*}
  Then, applying the Gronwall lemma and using the
  bound~\eqref{NSV86a}, we get
  \begin{equation*}
    \begin{aligned}
      \|\omega \|_{\lihsp}^2
      \leq\big( \delta^{4 - 2s'}(\delta^2 + \alpha^2 ) (\| \uob - \uo
      \|_{\htr}^2 + \| \uob \|_{\htr}^2) \big) C(T).
    \end{aligned}
  \end{equation*}
  making  computations similar to those performed
  in~\eqref{eq:NSV76}-\eqref{eq:NSV78}, we finally obtain the estimate
  \begin{equation*}
    \begin{aligned}
      \frac{d}{dt}( \|\omega\|_{\htr}^2 + \alpha^2 \| \nabla
      \omega\|_{\htr}^2) \leq & C(\|\uad\|_{\htr} + \|\uab\|_{\htr} +
      1) ( \| \omega \|_{\htr}^2 + \alpha^2\|\nabla \omega
      \|_{\htr}^2)
      \\
      & + \widehat C^2(T, \alpha, \beta, \delta ) ,
    \end{aligned}
  \end{equation*}
  where $ \widehat C (T, \alpha, \beta, \delta) := \big( \delta^{2 -
    2s'} (\delta^2 + \alpha^2 )(\| \uob - \uo \|_{\htr}^2 + \| \uob
  \|_{\htr}^2) \big)^{\frac{1}{2}} C^{\frac{1}{2}}(T). $ Hence, we get
  \begin{equation*}
    \begin{aligned}
      \|\uad - \uab \|_{\lihtr}^2 \leq & \big( \| \uod -
      \uob\|_{\htr}^2 + \alpha^2\| \nabla ( \uod - \uob)\|_{\htr}^2
      \\
      &+ \widehat{C}{}^2(T,\alpha, \beta, \delta) T \big) e^{C\int_0^T
        ( \|\uad (s)\|_{\htr} + \|\uab (s)\|_{\htr} + 1)ds}.
    \end{aligned}
  \end{equation*}
  Now, arguing as in~\eqref{eq:NSV85xy}, 
  we can conclude that
  \begin{equation}
    \label{eq:NSV87}
    \begin{aligned}
      \| \uad - \uab \|_{\lihtr}^2 \leq & C\Big( \| \uod - \uo
      \|_{\htr}^2 + C(1 + \frac{\alpha^2}{\delta^2}) \| \uob - \uo
      \|_{\htr}^2
      \\
      & + \alpha^2 \| \uob \|_{\hq}^2 +\widehat{C}{}^2(T,\alpha,
      \beta, \delta) T \Big) e^{C(T, \alpha, \beta, \delta)},
    \end{aligned}
  \end{equation}
  { where $C(T, \alpha, \beta, \delta) := C\int_0^T ( \|\uad
    (s)\|_{\htr} + \|\uab (s)\|_{\htr} + 1)ds$.}

  \bigskip
  \noindent\textit{Estimate for }$\| \uab - u \|_{\lihtr}$: Note that,
  up to a sub-sequence $\alpha_n \to 0$ as $n \to \infty$, the needed
  estimate on $\|u^{\alpha_n, \beta_n} - u\|_{\lihtr}$ is provided by
  Theorem~\ref{thm:theorem2}. \par
  As a consequence of the above bound and the
  estimates~\eqref{eq:NSV85yy} and~\eqref{eq:NSV87}, for any positive
  sequence $\{\nu_n\}$, with $\nu_n \to 0$ as $n \to \infty$, letting
  $\beta_n \to 0$, we can choose a pair of sequences $\alpha_n,
  \delta_n \to 0$ (look at the proof of Theorem~\ref{thm:theorem2}),
  such that $\|u_{0, \delta_n} - \uo\|_{\htr}$,
  $\alpha_n^2 \|u^{\beta_n}_{0}\|_{\hq}^2$,
  $\frac{\alpha_n}{\delta_n}$, $\frac{\alpha_n}{\delta_n^{s' -1}}$,
  and $\frac{\nu_n}{\delta_n^2}$ go to zero as $n$ goes to infinity. %
  Hence, we get
  \begin{equation*}
    \|u^{\alpha_n, \beta_n, \nu_n} -u\|_{\lihtr} \to 0\quad\text{as }n
    \to \infty, 
  \end{equation*}
  and the thesis follows.
\end{proof}
\begin{remark}
  The  result holds also in spaces of more regular functions
  $H_m$, $m\geq3$, by using essentially the same techniques.
\end{remark}

\def\ocirc#1{\ifmmode\setbox0=\hbox{$#1$}\dimen0=\ht0 \advance\dimen0
  by1pt\rlap{\hbox to\wd0{\hss\raise\dimen0
  \hbox{\hskip.2em$\scriptscriptstyle\circ$}\hss}}#1\else {\accent"17 #1}\fi}
  \def\polhk#1{\setbox0=\hbox{#1}{\ooalign{\hidewidth
  \lower1.5ex\hbox{`}\hidewidth\crcr\unhbox0}}} \def\cprime{$'$}


\end{document}